\newtheorem{example}{Example}[section]
\newtheorem{theorem}{Theorem}[section]
\newtheorem{lemma}[theorem]{Lemma}
\newtheorem{proposition}[theorem]{Proposition}
\newtheorem{corollary}[theorem]{Corollary}
\newtheorem{remark}{Remark}[section]
\numberwithin{equation}{section}
\newcommand{\R}{\mathbb R}
\newcommand{\Z}{\mathbb Z} 
\newcommand{\N}{\mathbb N}
\title{Bimodal Wilson systems in $L^2(\R)$ }
\author{Divyang G. Bhimani}
\address{Department of Mathematics\\
University of Maryland\\
College Park\\
MD 20742}
\email{dbhimani@math.umd.edu}
\author{Kasso A. Okoudjou}
\address{Department of Mathematics and Norbert Wiener Center\\
University of Maryland\\
College Park\\
MD 20742}
\email{kasso@math.umd.edu}
\subjclass[2000]{Primary 42C15, Secondary 94A12, 42C40}
\date{\today}
\keywords{frame, Gabor system, orthonormal basis,  Wilson system}
\begin{document}
\begin{abstract} Given  a window $\phi \in L^2(\R),$ and  lattice parameters $\alpha, \beta>0,$   we introduce a bimodal Wilson system $\mathcal{W}(\phi, \alpha, \beta)$ consisting of   linear combinations of at most two elements from an  associated Gabor $\mathcal{G}(\phi, \alpha, \beta)$.  For a class of window functions $\phi,$ we show that  the   Gabor system   $\mathcal{G}(\phi, \alpha, \beta)$ is  a tight  frame of   redundancy  $\beta^{-1}$ if and only if  the Wilson system $\mathcal{W}(\phi, \alpha, \beta)$ is Parseval system for  $L^2(\mathbb R).$ Examples of  smooth rapidly decaying generators $\phi$ are constructed. In addition, when $3\leq \beta^{-1}\in \N$, we prove that it is impossible to renormalize the elements of the constructed Parseval Wilson frame so as to get   a well-localized  orthonormal  basis for $L^2(\mathbb R)$. 
\end{abstract}
\maketitle \pagestyle{myheadings} \thispagestyle{plain}
\markboth{D. G. BHIMANI AND K. A. OKOUDJOU}{BIMODAL WILSON SYSTEMS}


\section{Introduction} \label{sec1}
Given that  $ \{e^{2\pi i m \cdot}: m\in \mathbb Z\}$
 forms an orthonormal 
basis (ONB) for $L^2([0, 1)),$ it is easy to establish that 
$$\mathcal{G}(\chi, 1,1)=\{\chi_{[0,1)}(\cdot-j)e^{2\pi i m \cdot}: j,m \in \mathbb Z\},$$
is an ONB for $L^2(\mathbb R)$, where 
 $\chi_{[0,1)}$ is the characteristic function of $[0,1).$ $\mathcal{G}(\chi, 1,1)$ is the simplest example of  Gabor systems,  first introduced in 1946 by D.~Gabor \cite{dg}. More generally, given $\alpha, \beta >0$ and $\phi\in L^2(\mathbb R)$, the set 
  \begin{equation}\label{gabsys}
 \mathcal{G}(\phi, \alpha, \beta)=\{ \phi_{j, m}(\cdot):=\phi(\cdot - \beta j)e^{2\pi i \alpha m \cdot}: \, j, m\in \mathbb Z\}
 \end{equation}
  is the Gabor system with generator (function) $\phi$ and (time-frequency) parameters $\alpha, \beta.$ 
 $ \mathcal{G}(\phi, \alpha, \beta)$ is called a Gabor frame if there exist $0<A\leq B$ such that for every $f\in L^2(\mathbb R)$ we have 
 \begin{equation}\label{frameineq}
 A\|f\|^2\leq \sum_{j, m \in \mathbb Z}|\langle f, \phi_{j, m}\rangle|^2\leq B\|f\|^2.
 \end{equation} 
A Gabor frame with $A=B$ is called a tight Gabor frame. In this case the frame bound $A$ will be referred to as the  \textbf{redundancy $A$}.  If in addition, $A=B=1$ we call the system a Parseval (Gabor) frame. We recall the following well-known result that will be used in the sequel, see \cite[Theorem 8.1]{CasChrJan01}, and \cite[Theorem 3.1]{wc}.

\begin{proposition}\label{wcza} 
Let $\phi \in L^2(\mathbb R)$ and $\alpha, \beta >0$. 
The  Gabor system  $\mathcal{G}(\phi, \alpha, \beta)$ is a  tight frame for $L^2(\mathbb R)$   with frame bound  $\beta^{-1}$  if and only if  $\phi$ satisfies 
$\sum_{ m \in \mathbb Z} \hat{\phi}(\xi-\alpha m) \overline{\hat{\phi}(\xi+\beta^{-1}k-\alpha m)} =\delta_{k,0}$ a.e. for each $k\in \mathbb Z.$
\end{proposition}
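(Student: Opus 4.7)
The plan is to compute the frame sum $\sum_{j,m\in\Z}|\langle f,\phi_{j,m}\rangle|^2$ on the Fourier side and then match it term-by-term against $\beta^{-1}\|f\|^2$.

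First, I would pass to the frequency domain. A short computation gives $\widehat{\phi_{j,m}}(\xi)=e^{-2\pi i\beta j(\xi-\alpha m)}\hat\phi(\xi-\alpha m)$, so by Plancherel each inner product $\langle f,\phi_{j,m}\rangle$ is, up to a unimodular phase in $(j,m)$, the $j$-th Fourier coefficient on $[0,\beta^{-1})$ of the $\beta^{-1}$-periodization
$$\tilde F_m(\xi):=\sum_{\ell\in\Z}\hat f(\xi+\beta^{-1}\ell)\,\overline{\hat\phi(\xi+\beta^{-1}\ell-\alpha m)}.$$
Summing first over $j$ via Parseval's identity on $L^2([0,\beta^{-1}))$ replaces $\sum_j|\langle f,\phi_{j,m}\rangle|^2$ by $\beta^{-1}\int_0^{\beta^{-1}}|\tilde F_m(\xi)|^2\,d\xi$.

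Next, I would expand $|\tilde F_m|^2$, sum over $m\in\Z$, reindex the double sum over $(\ell,\ell')$ by $k:=\ell-\ell'$, and unfold the $\xi$-integral back to all of $\R$ via a translation. A routine reorganization then yields
$$\sum_{j,m\in\Z}|\langle f,\phi_{j,m}\rangle|^2 \;=\; \beta^{-1}\sum_{k\in\Z}\int_{\R}\hat f(\eta+\beta^{-1}k)\,\overline{\hat f(\eta)}\,G_k(\eta)\,d\eta,$$
where $G_k(\eta):=\sum_{m\in\Z}\hat\phi(\eta-\alpha m)\,\overline{\hat\phi(\eta+\beta^{-1}k-\alpha m)}$ is exactly the sum appearing in the statement. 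If $G_k=\delta_{k,0}$ a.e.\ for every $k\in\Z$, only the $k=0$ term survives and equals $\beta^{-1}\|\hat f\|_2^2=\beta^{-1}\|f\|_2^2$, which is the tight frame identity with bound $\beta^{-1}$. For the converse, I would polarize the quadratic identity in $f$ and then test with $\hat f$ supported in intervals of length strictly less than $\beta^{-1}$, so that at each $\eta$ only a single shift $\beta^{-1}k$ contributes; this isolates each $G_k$ separately and forces $G_0\equiv 1$ and $G_k\equiv 0$ for $k\neq 0$.

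The main obstacle I anticipate is not the Fourier bookkeeping itself (which is routine) but the analytic justification of the interchanges of summation and integration, namely the $L^2$-convergence of the periodization $\tilde F_m$ and of the $k$-sum in the Walnut-type representation above, together with the extraction of pointwise a.e.\ equalities from the frame identity via a density/localization argument on $f$. Both steps are standard but account for most of the care in a rigorous write-up.
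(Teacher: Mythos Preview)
The paper does not actually prove this proposition: it is quoted as a well-known result with references to \cite[Theorem 8.1]{CasChrJan01} and \cite[Theorem 3.1]{wc}, so there is no in-paper proof to compare against. Your proposal is the standard Walnut-representation argument and is correct in outline; in fact it is precisely the template the paper itself follows when proving the harder Wilson-system analogue (Theorem~\ref{chwb}) via Proposition~\ref{dp}, Lemma~\ref{tl1}, and the localization argument on the dense class $\mathcal{D}$. If you want your write-up to match the paper's conventions, you can lift those ingredients directly: restrict first to $f\in\mathcal{D}$ to justify all interchanges, and for the converse use the Lebesgue-point/polarization trick with $\hat f_\delta,\hat g_\delta$ supported in small balls, exactly as in the proof of Theorem~\ref{chwb}.
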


In addition,  the following result  about Parseval frames and ONBs will be used repeatedly, we refer to \cite[Section 7.1]{hw} for details.

\begin{proposition}\label{KS}
 Let  $ \{e_j\}_{j=1}^{\infty} \subset L^2(\mathbb R) $. The following statements hold. 
 \begin{enumerate}
 \item  \label{pl1}  For all $f \in L^2(\mathbb R),$ we have  \[\|f\|^2= \sum_{j=1}^{\infty} \left| \langle f, e_j \rangle \right|^2\] if and only if 
\[f= \sum_{j=1}^{\infty} \langle f, e_j \rangle e_j,\] with convergence in $L^2(\R),$ for all $f\in L^2(\R).$

\item \label{pl2}  If
\[ \|f\|^2= \sum_{j=1}^{\infty} \left| \langle f, e_j \rangle \right|^2 \]
holds for all $f$ in a dense subset $D\subset L^2(\mathbb R),$
then this equality holds for all $f\in L^2(\R).$
\item \label{hwt}
Suppose $\{ e_j: j=1,2,...\}$ is a Parseval frame. If  $\|e_j\|_{L^2}=1$ for all
$j$, then  $\{ e_j: j=1,2,...\}$ is an orthonormal basis for  $L^2(\R).$
\end{enumerate}    

  \end{proposition}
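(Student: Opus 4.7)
The plan is to handle the three items in turn; each reduces to a short calculation combined with polarization or a density argument, and none presents a serious obstacle.

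For item (1), I would first note that the Parseval hypothesis gives $\sum_j |\langle f, e_j\rangle|^2 = \|f\|^2$, so the analysis operator $Cf := (\langle f, e_j\rangle)_j$ maps $L^2(\R)$ isometrically into $\ell^2(\N)$; its adjoint $C^\ast(c) = \sum_j c_j e_j$ is therefore bounded, and the synthesis series makes sense in $L^2(\R)$. Polarizing the norm identity in $f$ yields
\[
\langle f, g\rangle = \sum_j \langle f, e_j\rangle \overline{\langle g, e_j\rangle} = \langle C^\ast C f, g\rangle \quad \text{for all } f,g\in L^2(\R),
\]
so $C^\ast C = I$, which is exactly the reconstruction formula $f = \sum_j \langle f, e_j\rangle e_j$. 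The converse is immediate: pairing the reconstruction with $f$ itself and invoking continuity of the inner product on the partial sums gives $\|f\|^2 = \sum_j |\langle f, e_j\rangle|^2$.

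For item (2), the hypothesis says $C$ is an $\ell^2$-isometry on $D$. A Fatou argument along a sequence $f_n\in D$ with $f_n\to f$ extends Bessel's inequality $\sum_j |\langle f, e_j\rangle|^2 \le \|f\|^2$ to all of $L^2(\R)$, so $C$ itself is bounded on $L^2(\R)$. Continuity of each linear functional $\langle \cdot, e_j\rangle$ identifies the continuous extension of $C|_D$ with $C$ on all of $L^2(\R)$, and the isometric identity $\|Cf\|_{\ell^2}=\|f\|$ then passes from the dense set $D$ to the whole space by continuity of both sides.

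For item (3), I would test the Parseval identity against each $e_k$ itself:
\[
1 = \|e_k\|^2 = \sum_j |\langle e_k, e_j\rangle|^2 = 1 + \sum_{j\neq k} |\langle e_k, e_j\rangle|^2,
\]
which forces $\langle e_k, e_j\rangle = \delta_{k,j}$, so $\{e_j\}$ is orthonormal. Completeness comes from item (1): since every $f\in L^2(\R)$ admits the expansion $f=\sum_j \langle f, e_j\rangle e_j$, the only vector orthogonal to every $e_j$ is zero, and the closed linear span equals $L^2(\R)$. Combining orthonormality with completeness yields an ONB. The only nontrivial bookkeeping lies in the density step of item (2); the rest is routine.
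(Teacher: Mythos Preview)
Your proof is correct. The paper does not actually supply its own proof of this proposition; it simply states the result and refers the reader to \cite[Section~7.1]{hw}, so there is nothing to compare against beyond noting that your argument is the standard one found in that reference: polarization for part~(1), a Bessel-plus-density extension for part~(2), and testing the Parseval identity on the frame elements themselves for part~(3).
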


The characterization of the generators $\phi$ and the time-frequency parameters $\alpha, \beta$ such that $\mathcal{G}(\phi, \alpha, \beta)$ is a frame is still largely unresolved \cite{Gro14}. Nonetheless, it is known that if $\mathcal{G}(\phi, \alpha, \beta)$ is a Gabor  frame  then $0<\alpha \beta \leq 1.$  But when  $\alpha\beta>1$ the system in~\eqref{gabsys} is never complete. Furthermore,  $\mathcal{G}(\phi, \alpha, \beta)$ is an ONB for $L^2(\mathbb R)$ if and only if $\alpha \beta=1$.  For more details about these density results we refer to \cite[Section 7.5]{gro}, \cite{heil07}, and the references therein. 
It is also known that all Gabor ONB behave essentially like our first example in the sense that if $\mathcal{G}(\phi, \alpha, 1/\alpha)$ is an ONB, then, the window $\phi$ must be poorly localized in time or frequency that is $$\int_{\mathbb R}|x|^2|\phi(x)|^2\, dx=\infty \quad {\textrm or}\quad  \int_{\mathbb R}|\xi|^2 \, |\hat{\phi}(\xi)|^2\, d\xi=\infty$$ where $\hat{\phi}$ is the Fourier transform of $\phi$. This is the Balian-Low Theorem (BLT) that imposes strict limits on Gabor systems that form an ONB \cite{blt, Bat88, BenHeiWal94, Low85}.  

Introduced numerically by  K.~G.~Wilson \cite{ws}, the so-called  \emph{generalized Warnnier functions} have good time-frequency localization properties and thus are not subjected to the localization limits dictated  by the BLT. Latter, Daubechies, Jaffard, and Journ\'e formalized this definition and introduced what is now known as \emph{Wilson systems} \cite{djl}. Wilson ONBs have played major roles in some recent applications, including the detection of  the gravitational waves \cite{ChMJM16, NeKlMi12, Drag16}, or their use in electromagnetic  reflection-transmission problems in fiber optics \cite{floris2018wilson,floris2018electromagnetic} .

We now define the Wilson system for which each element $\psi_{j,m}$ is a linear combination of two Gabor functions localized at $(j, m)$ and $(j, -m)$ respectively. More precisely, given a Gabor system  $\mathcal{G}(\phi, \alpha, \beta)$, the associated (bimodal) Wilson system $\mathcal{W}(\phi, \alpha, \beta)$ is 

\begin{equation}\label{wilsys}
\mathcal{W}(\phi, \alpha, \beta)= \{ \psi_{j,m}: j \in \mathbb Z, m \in \mathbb N_0\}
\end{equation} where

\begin{align}\label{dks}
\psi_{j,m}(x)=\begin{cases}  \sqrt{2\beta}\phi_{2j, 0}(x)= \sqrt{2\beta} \phi(x-2\beta j) &  \text{if} \ \  j\in \mathbb Z, m=0,\\ 
\sqrt{\beta} \left[ e^{-2\pi i \beta j \alpha m}\phi_{j,m}(x) + (-1)^{j+m}e^{2\pi i \beta j \alpha m} \phi_{j,-m}(x)\right] & \text{if} \  \  (j,m)\in \mathbb Z \times \mathbb N.
\end{cases}
\end{align}
With these notations, the following result was proved in~\cite{djl}:

\begin{theorem}[\cite{djl}]\label{daub-wil} Let $\phi \in L^{2}(\mathbb R)$ be such that $\hat{\phi}(\xi) = \overline{\hat{\phi}(\xi)}$ and $\|\phi \|_2 = 1$. Then  the Gabor system $\mathcal{G}(\phi, 1, 1/2)$ 
is a tight frame for $L^{2}(\mathbb R)$ if, and only if, the Wilson system
$\mathcal{W}(\phi, 1, 1/2)$ 
is an orthonormal basis for $L^2(\mathbb R)$.
Furthermore, one can choose $\phi \in C^{\infty}(\mathbb R)$ with compact support. 
\end{theorem}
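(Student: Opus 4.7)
The plan is to reduce the equivalence, via Propositions~\ref{wcza} and~\ref{KS}, to two sub-claims: (a) $\sum_{(j,m) \in \Z \times \N_0}|\langle f, \psi_{j, m}\rangle|^2 = \|f\|^2$ for every $f \in L^2(\R)$ if and only if $\sum_{(j,m) \in \Z^2}|\langle f, \phi_{j, m}\rangle|^2 = 2\|f\|^2$ for every $f \in L^2(\R)$; and (b) under the Gabor tight frame hypothesis, $\|\psi_{j, m}\|_2 = 1$ for every $(j, m)$. Granting (a) and (b), Proposition~\ref{KS}(\ref{hwt}) yields the direction Gabor tight $\Rightarrow$ Wilson ONB, and the reverse implication uses that any ONB is a Parseval frame.

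For sub-claim (a), I would expand $|\langle f, \psi_{j, m}\rangle|^2$ directly from~\eqref{dks}. With $\alpha = 1, \beta = 1/2$ the phase $e^{-2\pi i \beta j \alpha m}$ collapses to the sign $(-1)^{jm}$, and for $m \geq 1$,
\[
|\langle f, \psi_{j, m}\rangle|^2 = \tfrac12\bigl[|\langle f, \phi_{j, m}\rangle|^2 + |\langle f, \phi_{j, -m}\rangle|^2\bigr] + (-1)^{j+m}\mathrm{Re}\bigl(\langle f, \phi_{j, m}\rangle\overline{\langle f, \phi_{j, -m}\rangle}\bigr),
\]
while $|\langle f, \psi_{j, 0}\rangle|^2 = |\langle f, \phi_{2j, 0}\rangle|^2$ since $\sqrt{2\beta} = 1$. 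Summing over $(j, m) \in \Z \times \N_0$, folding the diagonal via $m \leftrightarrow -m$ symmetry into a sum over $m \in \Z \setminus\{0\}$, and combining the $m = 0$ row with its even/odd parity split, one arrives at
\[
\sum_{(j, m) \in \Z \times \N_0}|\langle f, \psi_{j, m}\rangle|^2 = \tfrac{1}{2}\sum_{(j,m) \in \Z^2}|\langle f, \phi_{j, m}\rangle|^2 + \tfrac12 T(f),
\]
where $T(f) := \sum_{(j,m) \in \Z^2}(-1)^{j+m}\langle f, \phi_{j, m}\rangle\overline{\langle f, \phi_{j, -m}\rangle}$ is automatically real by the same $m \leftrightarrow -m$ symmetry. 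Hence (a) reduces to the key lemma $T \equiv 0$.

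The lemma $T \equiv 0$ is where the hypothesis $\hat\phi = \overline{\hat\phi}$ is essential. Plancherel gives $\langle f, \phi_{j, m}\rangle\overline{\langle f, \phi_{j, -m}\rangle} = \iint \hat f(\xi)\overline{\hat f(\eta)}\,e^{i\pi j(\xi - \eta)}\,\hat\phi(\xi - m)\hat\phi(\eta + m)\,d\xi\,d\eta$, the reality of $\hat\phi$ being used to suppress a conjugate. Poisson summation $\sum_j (-1)^j e^{i\pi j(\xi - \eta)} = 2\sum_\ell \delta(\xi - \eta + 1 - 2\ell)$ collapses the $j$-sum to
\[
T(f) = 2\sum_{q \text{ odd}}\int \hat f(\xi)\overline{\hat f(\xi + q)}\,K_q(\xi)\,d\xi, \qquad K_q(\xi) := \sum_{m \in \Z}(-1)^m\hat\phi(\xi - m)\hat\phi(\xi + q + m).
\]
The involution $m \mapsto -q - m$ swaps the two $\hat\phi$-factors in each summand and converts $(-1)^m$ to $(-1)^q(-1)^m$, so $K_q(\xi) = (-1)^q K_q(\xi)$; thus $K_q \equiv 0$ for every odd $q$, and $T \equiv 0$. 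I would carry out this calculation on the dense subclass $\{f : \hat f \in C_c^\infty(\R)\}$ to justify Fubini and Poisson, and extend to all of $L^2(\R)$ by Proposition~\ref{KS}(\ref{pl2}). Sub-claim (b) then follows by applying the same expansion with $f = \psi_{j, m}$ and invoking the $k = 0$ case $\sum_n |\hat\phi(\xi - n)|^2 = 1$ of Proposition~\ref{wcza}.

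The main technical obstacle is precisely the antisymmetry producing $K_q \equiv 0$: the real-valuedness of $\hat\phi$ is what allows the product $\hat\phi(\xi - m)\hat\phi(\xi + q + m)$ to appear without complex conjugation, so that the involution $m \mapsto -q - m$ (which merely swaps the two factors) produces a clean sign $(-1)^q$ instead of a modulus. Finally, for the existence assertion, I would take $\phi$ real and even with $\mathrm{supp}(\phi) \subset [-1/2, 1/2]$: the support condition annihilates all off-diagonal terms in the time-side (Walnut) dual of Proposition~\ref{wcza}, reducing the tight frame requirement to the single partition identity $\sum_j |\phi(x - j/2)|^2 = 2$ a.e. An explicit $C^\infty$ solution is $\phi(x) = \sqrt 2\,\cos(\pi\nu(x + 1/2))$ on $[-1/2, 1/2]$ for a smooth $\nu \colon [0, 1] \to \R$ interpolating from $-1/2$ at $0$ to $1$ at $1/2$ to $-1/2$ at $1$, with all derivatives vanishing at the endpoints and satisfying $\nu(y) + \nu(y + 1/2) = 1/2$ on $[0, 1/2]$; such $\nu$ is constructed by standard $e^{-1/t}$-bump techniques.
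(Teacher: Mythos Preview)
Your approach is essentially the one the paper develops in its general framework (Theorems~\ref{chwb} and~\ref{wg} together with Remark~\ref{FR}\eqref{FR2}), specialized to $\alpha=1$, $\beta=1/2$. The paper does not reprove Theorem~\ref{daub-wil} directly (it is cited from~\cite{djl}), but its Proposition~\ref{dp} gives exactly your decomposition: your $\tfrac12\sum_{j,m}|\langle f,\phi_{j,m}\rangle|^2$ is the paper's $\mathcal I_0(f)$ once the tight-frame conditions $\Phi_k=\delta_{k,0}$ are imposed, and your $\tfrac12 T(f)$ is the paper's $\mathcal I_1(f)$, with your kernels $K_q$ for odd $q$ being precisely the paper's $\Delta_k$ at $q=2k+1$. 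Your involution $m\mapsto -q-m$ forcing $K_q\equiv0$ is the change of variable in Remark~\ref{FR}\eqref{FR2}.

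Two imprecisions to fix. First, your justification of sub-claim~(b) invokes ``the $k=0$ case'' of Proposition~\ref{wcza}, but the $k=0$ identity $\sum_n|\hat\phi(\xi-n)|^2=1$ only recovers $\|\phi\|_2=1$, which is already a hypothesis; the cross term you must kill is $\Re\langle\phi_{j,m},\phi_{j,-m}\rangle=\int\hat\phi(\xi)\hat\phi(\xi+2m)\,d\xi=\int_0^1\Phi_m(\xi)\,d\xi$, and this vanishes by the $k=m\neq0$ cases of Proposition~\ref{wcza}. Second, in your explicit $C^\infty$ construction you need $\hat\phi$ real, i.e.\ $\phi$ real and even; your profile $\nu$ as described does not automatically yield $\phi(-x)=\phi(x)$, so you should also impose the reflection symmetry $\nu(1-y)=\nu(y)$, which is compatible with $\nu(y)+\nu(y+1/2)=1/2$ and the endpoint values.
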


Theorem~\ref{daub-wil} has been generalized from the case of Gabor frames on the separable lattice $\Z \times \tfrac{1}{2}\mathbb Z $ to non separable lattices $A\mathbb Z^2$ where $A$ is any invertible matrix such that $|\textrm{det}A|=1/2$, see \cite{gt, pw}. The underlying theme in all these results is a one-to-one association of a tight Gabor frame of redundancy $(\alpha\beta)^{-1}=2$ with a bimodal  Wilson basis.  However,  it is still unknown whether similar associations can be made starting from a tight Gabor frame of other redundancy. For example, Gr\"ochenig  in \cite[p.168]{gro} posed the problem of the existence and construction  of a Wilson ONB starting from a tight Gabor frame with $\alpha=1$ and $\beta=1/3$. This problem is still unresolved. However, Wojdy\l\l o proved that taking linear combinations of three elements of a redundancy $3$ tight Gabor frame results in a (trimodal) Parseval Wilson frame \cite{pw1}. But the method developed was not constructive and it is not clear how to use it to produce an example of a well-localized window function $\phi$.  In higher dimensions, Wilson ONBs are usually constructed by taking tensor products of  $1$ dimensional Wilson ONBs. In this context,  (non-separable) Wilson ONBs for $L^2(\mathbb R^d)$ were recently constructed starting from tight Gabor frame of redundancy $2^k$ for each $k=0,1,2, \hdots, d$,\cite[Theorem 3.1 $\&$ Theorem 4.5]{bjlo}.

In this paper, we show that starting from a tight Gabor frame of redundancy $1/\beta$, one can construct a bimodal Parseval  Wilson frame. Furthermore, we can choose the generator to be a Schwartz function.  For example, as a consequence of some of our results we shall prove the following. 

\begin{theorem}\label{thm-main1} Let $\beta \in (0, 1/2)$. There exists $\phi \in S(\mathbb R)$ with $\hat{\phi} \in C_c^{\infty}(\mathbb R)$ such that 
the  Gabor system  $\mathcal{G}(\phi, 1, \beta)$ is a  tight frame for $L^2(\mathbb R)$   with frame bound  $\beta^{-1}$  if and only if  the Wilson system $\mathcal{W}(\phi, 1, \beta)$ is a Parseval  frame for $L^2(\mathbb R)$.
\end{theorem}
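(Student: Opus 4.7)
The plan is to deduce Theorem~\ref{thm-main1} from the paper's general equivalence (announced in the abstract) between tight Gabor frames of redundancy $\beta^{-1}$ and Parseval Wilson frames, valid for a suitable class of windows, and to reduce matters to exhibiting one explicit $\phi\in S(\R)$ with $\hat\phi\in C_c^\infty(\R)$ for which the Gabor characterization of Proposition~\ref{wcza} is satisfied by inspection. With $\alpha=1$, that characterization becomes the pair of identities
\begin{equation*}
\sum_{m\in\Z}|\hat\phi(\xi-m)|^2 = 1\ \text{a.e.}\quad\text{and}\quad \sum_{m\in\Z}\hat\phi(\xi-m)\,\overline{\hat\phi(\xi+k/\beta-m)} = 0\ \text{a.e. for } k\in\Z\setminus\{0\}.
\end{equation*}

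Since $\beta<1/2$ forces $1/(2\beta)>1$, fix any $L\in(1/2,1)$; automatically $2L<1/\beta$. Choose a Meyer-type transition $\eta\in C^\infty(\R)$ with $\eta=0$ on $(-\infty,0]$, $\eta=1$ on $[1,\infty)$, and $\eta(t)^2+\eta(1-t)^2=1$ for $t\in[0,1]$, and define
\begin{equation*}
\hat\phi(\xi):=\eta\!\left(\frac{L-|\xi|}{2L-1}\right),\qquad\xi\in\R.
\end{equation*}
This $\hat\phi$ is real, non-negative, even, supported in $[-L,L]$, and identically $1$ on $[-(1-L),1-L]$; on this plateau the argument of $\eta$ exceeds $1$, which absorbs any singularity from $|\xi|$ at the origin, so $\hat\phi\in C_c^\infty(\R)$ and $\phi:=\FF^{-1}\hat\phi\in S(\R)$. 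The off-diagonal identity is immediate: for every $k\neq 0$ the translated supports $[-L,L]+m$ and $[-L,L]+m-k/\beta$ are disjoint because $|k|/\beta\geq 1/\beta>2L$, so every product in the sum vanishes pointwise. The diagonal identity is verified on the overlap band $[1-L,L]$, where only the translates by $m=0,1$ contribute: setting $t=(L-\xi)/(2L-1)$ and $1-t=(\xi-1+L)/(2L-1)$ converts $\hat\phi(\xi)^2+\hat\phi(\xi-1)^2$ into $\eta(t)^2+\eta(1-t)^2=1$; elsewhere a unique translate equals $1$. Proposition~\ref{wcza} then yields that $\mathcal G(\phi,1,\beta)$ is a tight frame with bound $\beta^{-1}$, and the general equivalence produces the iff statement of Theorem~\ref{thm-main1}.

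The main obstacle is not this construction (classical in the spirit of the Meyer wavelet window) but the general equivalence invoked above. To establish it one expands $\sum_{j\in\Z,\,m\in\N_0}|\langle f,\psi_{j,m}\rangle|^2$, separates the diagonal contributions $\beta\bigl(|\langle f,\phi_{j,m}\rangle|^2+|\langle f,\phi_{j,-m}\rangle|^2\bigr)$ from the oscillatory cross terms $2\beta(-1)^{j+m}\mathrm{Re}\bigl(e^{-4\pi i\beta jm}\langle f,\phi_{j,m}\rangle\overline{\langle f,\phi_{j,-m}\rangle}\bigr)$, and matches these against the $k\neq 0$ off-diagonal terms of Proposition~\ref{wcza} via Walnut's representation (or equivalently, Poisson summation); the hypothesis $\hat\phi\in C_c^\infty$ is essential to legitimize every interchange of sums and integrals and to keep the sums finite locally in $\xi$.
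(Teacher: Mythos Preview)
Your construction is the same Meyer-type window as the paper's Example~\ref{example2}, and your verification that $\Phi_k=\delta_{k,0}$ (the tight-frame condition of Proposition~\ref{wcza}) is correct. The gap lies in the ``general equivalence'' you invoke. Theorem~\ref{wg} does \emph{not} assert that $\mathcal G(\phi,1,\beta)$ tight with bound $\beta^{-1}$ is equivalent to $\mathcal W(\phi,1,\beta)$ Parseval; it asserts that $\mathcal G(\phi,1,\beta)$ tight \emph{together with} $\Delta_k=0$ for every $k\in\Z$ is equivalent to $\mathcal W(\phi,1,\beta)$ Parseval, where $\Delta_k(\xi)=\sum_{m}(-1)^m\hat\phi(\xi+m)\,\overline{\hat\phi(\xi+\beta^{-1}(k+\tfrac12)-m)}$. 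Your final paragraph misidentifies where the cross terms go: in the decomposition of Proposition~\ref{dp} the diagonal pieces $\beta\bigl(|\langle f,\phi_{j,m}\rangle|^2+|\langle f,\phi_{j,-m}\rangle|^2\bigr)$ assemble into $\mathcal I_0(f)$, which is built from the $\Phi_k$, whereas the oscillatory cross terms assemble into the \emph{separate} functional $\mathcal I_1(f)$ built from the $\Delta_k$ and carrying the half-integer shifts $\beta^{-1}(k+\tfrac12)$. They are not ``matched against the $k\neq 0$ off-diagonal terms of Proposition~\ref{wcza}''; the vanishing of $\Delta_k$ is an independent requirement on $\phi$ that you never address.

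The paper's proof differs from yours exactly here: Example~\ref{example2} imposes the extra support bound $\gamma<1/(4\beta)$ (on top of $1/2<\gamma<1$) precisely in order to argue that $\hat\phi(\xi)\overline{\hat\phi(\xi+\beta^{-1}(k+\tfrac12))}\equiv 0$, and from this claims $\Delta_k=0$. Your choice $L\in(1/2,1)$ with only $2L<1/\beta$ omits that constraint. Without it the forward implication fails outright: for $\beta=1/3$ and any $L\in(1/2,3/4)$, the $m=1$ term of $\Delta_0$ is $-\hat\phi(\xi+1)\hat\phi(\xi+\tfrac12)$, which equals $-1$ at $\xi=-3/4$ while all other terms vanish, so $\Delta_0\not\equiv 0$ and $\mathcal W(\phi,1,\beta)$ is \emph{not} Parseval even though $\mathcal G(\phi,1,\beta)$ is tight. (Observe that the shift in the general term of $\Delta_k$ is $\beta^{-1}(k+\tfrac12)-2m$, not $\beta^{-1}(k+\tfrac12)$, so even the paper's support argument in Example~\ref{example2} needs more care at this point; one clean route when it is available is the symmetry $m\mapsto \beta^{-1}(k+\tfrac12)-m$ of Remark~\ref{FR}\eqref{FR2}.)
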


To convert this Wilson system into an ONB, one is left to normalize its elements to have unit $L^2$ norm. However, we prove that this is  impossible in general as the normalization conditions needed to get an ONB are incompatible with the definition of the Wilson system we use. In particular, our results suggest that for a redundancy $\beta^{-1}\in \N$ tight Gabor frame, the associated Wilson system should be  made of linear combinations of $\beta^{-1}$ elements from the Gabor frame.  It follows that  the bimodal Wilson system given by~\eqref{dks} where the coefficients in the linear combinations are the unimodular numbers $e^{-2\pi i \beta j \alpha m}$ and $(-1)^{j+m}e^{2\pi i \beta j \alpha m}$ can never lead to an ONB. 

\begin{theorem}\label{thm-main2} Let $3\leq \beta^{-1}\in \N$. There exists no function $\phi \in L^2(\mathbb R)$ with either $\hat{\phi}$ compactly supported, or $\phi$ and $\hat{\phi}$ having exponential decay, 
such that the Wilson system $\mathcal{W}(\phi, 1, \beta)$ is an ONB  for $L^2(\mathbb R)$.
\end{theorem}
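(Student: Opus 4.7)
The plan is to derive a contradiction by showing that the $L^2$-normalization forced on $\phi$ by the ONB property is incompatible with the mass condition that follows from the tight Gabor identity.

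Suppose, for a contradiction, that $\mathcal{W}(\phi,1,\beta)$ is an ONB for $L^2(\R)$. Then by Proposition~\ref{KS}(\ref{pl1}) it is a Parseval frame, and by Proposition~\ref{KS}(\ref{hwt}) every element must satisfy $\|\psi_{j,m}\|_2 = 1$. Reading off the $m=0$ case from \eqref{dks}, a change of variables gives
\[
\|\psi_{j,0}\|_2^2 \;=\; 2\beta\,\|\phi\|_2^2,
\]
so the unit-norm requirement forces $\|\phi\|_2^2 = 1/(2\beta)$.

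On the other hand, under either of the two decay hypotheses on $\phi$, I would invoke the equivalence ``$\mathcal{W}(\phi,1,\beta)$ is a Parseval frame $\Leftrightarrow$ $\mathcal{G}(\phi,1,\beta)$ is a tight Gabor frame of bound $\beta^{-1}$,'' which is the content of Theorem~\ref{thm-main1} and its underlying companion lemmas established earlier in the paper. This promotes the Parseval Wilson assumption to the statement that $\mathcal{G}(\phi,1,\beta)$ is a tight Gabor frame of bound $\beta^{-1}$. Applying Proposition~\ref{wcza} with $k=0$ yields $\sum_{m\in\Z}|\hat\phi(\xi-m)|^2 = 1$ a.e., and integrating over the fundamental domain $[0,1)$ (with Fubini--Tonelli) gives
\[
\|\phi\|_2^2 \;=\; \int_\R |\hat\phi(\xi)|^2\,d\xi \;=\; \int_0^1 \sum_{m\in\Z}|\hat\phi(\xi-m)|^2\,d\xi \;=\; 1.
\]
Combining the two identities yields $1/(2\beta) = 1$, hence $\beta = 1/2$. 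Since the hypothesis $3 \le \beta^{-1} \in \N$ forces $\beta \le 1/3 < 1/2$, this is the desired contradiction.

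The conceptual core (and the only non-trivial step) is the implication ``Parseval Wilson $\Rightarrow$ tight Gabor of redundancy $\beta^{-1}$'' for $\phi$ in the two decay classes. The decay assumptions in the statement -- either $\hat\phi$ compactly supported, or $\phi$ and $\hat\phi$ both of exponential decay -- are precisely what is needed to justify unconditionally the interchange of summation and integration in the Daubechies--Jaffard--Journ\'e-type identity relating the Wilson and Gabor frame operators. Once that equivalence is available, the remainder of the proof is a short numerical mismatch: the prefactor $2\beta$ in the $m=0$ Wilson generators is reconcilable with the total mass $1$ coming from the tight-frame identity only at $\beta = 1/2$, and the integer constraint $\beta^{-1} \ge 3$ rules that value out.
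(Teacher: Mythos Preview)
Your argument is correct and is in fact more elementary than the paper's own proof. Both proofs extract the same numerical contradiction --- the ONB property forces $\|\phi\|_2^2 = 1/(2\beta)$ via the $m=0$ generators, while the Parseval property forces $\|\phi\|_2^2 = 1$ --- but they compute the second identity differently. The paper routes the computation through the Zak transform: it argues (somewhat loosely) that $\hat\phi$ arises as $Z_\beta^{-1}\Psi$ for the explicit $\Psi$ of \eqref{tpsi} and then evaluates $\|\Psi\|_{L^2([0,1)^2)}^2 = 1$ by exploiting the quasi-periodicity \eqref{qp} of $|G|^2$. You instead read off $\Phi_0 = 1$ a.e.\ directly from the Parseval characterization and integrate over a period, which is shorter and bypasses the Zak machinery entirely.

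Two small corrections. First, the implication you need, ``Wilson Parseval $\Rightarrow$ $\Phi_k = \delta_{k,0}$ a.e.,'' is Theorem~\ref{chwb} (equivalently the direction (b)$\Rightarrow$(a) of Theorem~\ref{wg}), not Theorem~\ref{thm-main1}, which is only an existence statement about a particular $\phi$; cite the former. Second, your claim that the decay hypotheses are ``precisely what is needed'' to justify the interchange is inaccurate: Theorem~\ref{chwb} is proved in the paper for arbitrary $\phi\in L^2(\R)$ (the interchange in Lemma~\ref{tl1} relies on compact support of $\hat f$ for $f\in\mathcal D$, not on decay of $\phi$), and your own integration step needs only Tonelli since the summand is nonnegative. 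Consequently your argument actually proves the conclusion for \emph{every} $\phi\in L^2(\R)$, not just the two decay classes named in the statement --- a strengthening that the paper's Zak-transform route does not obviously deliver.
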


We recall that the space of smooth functions on $\mathbb R$ with compact support is denoted by $C_{c}^{\infty}(\mathbb R)$,  the Schwartz class is $\mathcal{S}(\mathbb R)$, the space of tempered distributions is $\mathcal{S'}(\mathbb R).$ 
The (unitary) $L^2$ Fourier transform  is defined by 
\begin{eqnarray*}
\mathcal{F}f(w)=\widehat{f}(w)=\int_{\mathbb R} f(t) e^{- 2\pi i t w}dt, \  w\in \mathbb R, 
\end{eqnarray*}
with inverse given by \begin{eqnarray*}
\mathcal{F}^{-1}f(x)=f^{\vee}(x)=\int_{\mathbb R} f(w)\, e^{2\pi i x w} dw,~~x\in \mathbb R.
\end{eqnarray*}
The torus $\{ z\in \mathbb C: |z|=1\}$ is denoted by $\mathbb T.$ If $f\in L^1(\mathbb T)$, we define it's Fourier coefficients by
 $$\widehat{f}(m)=\int_{\mathbb T}f(x)e^{-2\pi i mx} dx,   \ (m\in \mathbb Z).$$

The rest of the paper is organized as follows. Section~\ref{b3} contains the technical results needed to prove our main results. In particular,  we derive necessary  and sufficient conditions on $\phi$ for the $\{\psi_{j,m}\}$   to be an ONB for $L^2(\mathbb R).$ In Section~\ref{cgfwb} we state and prove one of our main results Theorem~\ref{wg}. In particular, we give necessary and sufficient conditions to turn a tight Gabor frame into a Parseval Wilson system. We also  indicate under which extra condition this Wilson system becomes an ONB, and provide examples of generators $\phi \in \mathcal{S}(\R)$. Finally, in Section~\ref{ztrc} we use the Zak transform to construct more examples of generator $\phi \in  \mathcal{S}(\R)$ such that $\phi$ and $\hat{\phi}$ have exponential decay.

\section{Characterization for  Wilson bases in $L^2(\mathbb R)$}\label{b3}
In this section we find necessary and sufficient conditions on $\phi$ that guarantee that  the Wilson system $\mathcal{W}(\phi, \alpha, \beta)$ forms a Parseval frame, Theorem~\ref{chwb}. 
In addition, by  normalizing each vector in $\mathcal{W}(\phi, \alpha, \beta)$ we find additional conditions needed to make this Parseval (Wilson) frame an ONB.

\begin{theorem}\label{chwb} Let $\alpha, \beta >0,$ and $\{\psi_{j,m}\}_{j\in \mathbb Z,  m\in \mathbb N_0}$ is defined by \eqref{dks}. The following statements are equivalent:
\begin{enumerate}
\item[(a)] $\mathcal{W}(\phi, \alpha, \beta)=\{\psi_{j,m}\}_{j\in \mathbb Z,  m\in \mathbb N_0}$ is a Parseval frame for $L^2(\R)$. 
\item[(b)] $\Phi_k(\xi)=\delta_{k, 0}\, a.e.$,  and $\Delta_k(\xi)=0\, a.e.$ for each $k\in \Z$, where 
\begin{equation*}
\begin{cases}
\Phi_k(\xi)= \sum_{ m \in \mathbb Z} \hat{\phi}(\xi-\alpha m) \overline{\hat{\phi}(\xi+\beta^{-1}k-\alpha m)},\\
\Delta_k(\xi)=  \sum_{ m \in \mathbb Z} (-1)^m\hat{\phi}(\xi+\alpha m)\overline{\hat{\phi}(\xi+\beta^{-1}(k +1/2)-\alpha m)}.
\end{cases}
\end{equation*}
\end{enumerate}
\end{theorem}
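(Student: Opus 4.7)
The plan is to reduce the Parseval identity to a frequency-domain statement via Poisson summation in the translation index $j$, and then match the resulting double integrals against $\|f\|^2$ term by term. By Proposition~\ref{KS}(\ref{pl2}), it suffices to verify $\sum_{j,m}|\langle f,\psi_{j,m}\rangle|^2 = \|f\|^2$ on the dense subset $D = \{f \in L^2(\R) : \hat f \in C_c^\infty(\R)\}$, where all sums are locally finite and all interchanges are trivially justified.

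I would begin by expanding, using \eqref{dks}, for $m \geq 1$,
\[
|\langle f,\psi_{j,m}\rangle|^2 = \beta\bigl(|\langle f,\phi_{j,m}\rangle|^2 + |\langle f,\phi_{j,-m}\rangle|^2\bigr) + 2\beta(-1)^{j+m}\operatorname{Re}\!\left\{e^{4\pi i\beta j\alpha m}\langle f,\phi_{j,m}\rangle\overline{\langle f,\phi_{j,-m}\rangle}\right\},
\]
while the $m = 0$ contribution is $2\beta|\langle f,\phi_{2j,0}\rangle|^2$. Summing the diagonal squares over $j\in\Z$ and $m\geq 1$ recovers $\beta\sum_{j\in\Z, m\neq 0}|\langle f,\phi_{j,m}\rangle|^2$, and together with the $m=0$ piece the total diagonal equals $\beta\sum_{j,m\in\Z}|\langle f,\phi_{j,m}\rangle|^2 + \bigl[2\beta\sum_j|\langle f,\phi_{2j,0}\rangle|^2 - \beta\sum_j|\langle f,\phi_{j,0}\rangle|^2\bigr]$, where the bracketed ``defect'' arises from the doubling of the translation lattice at $m = 0$.

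Next I would compute each piece in the Fourier domain using $\widehat{\phi_{j,m}}(\xi) = e^{-2\pi i\beta j(\xi-\alpha m)}\hat\phi(\xi-\alpha m)$ together with the Poisson identities
\[
\sum_{j\in\Z} e^{2\pi i\beta j\zeta} = \beta^{-1}\sum_{k\in\Z}\delta\!\bigl(\zeta-\beta^{-1}k\bigr), \qquad \sum_{j\in\Z}(-1)^j e^{2\pi i\beta j\zeta} = \beta^{-1}\sum_{k\in\Z}\delta\!\bigl(\zeta-\beta^{-1}(k+\tfrac12)\bigr).
\]
The full Gabor diagonal collapses to $\sum_l\int \hat f(\xi)\overline{\hat f(\xi+\beta^{-1}l)}\,\overline{\Phi_l(\xi)}\,d\xi$. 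Splitting the lattice-doubled term into even and odd indices shows that the defect equals the $m=0$ slice of $\sum_l\int \hat f(\xi)\overline{\hat f(\xi+\beta^{-1}(l+1/2))}\overline{\hat\phi(\xi)}\hat\phi(\xi+\beta^{-1}(l+1/2))\,d\xi$. Finally, applying the alternating Poisson formula to the cross-term sum, noting the cancellation $e^{4\pi i\beta j\alpha m}\cdot e^{-4\pi i\beta j\alpha m}=1$ between the absorbed phase and the one produced by the two Fourier representations of $\langle f,\phi_{j,\pm m}\rangle$, then relabelling $m\mapsto -m$ to merge the $m\geq 1$ index with its complex-conjugate partner, produces exactly the $m\neq 0$ contributions of $\sum_l\int\hat f(\xi)\overline{\hat f(\xi+\beta^{-1}(l+1/2))}\overline{\Delta_l(\xi)}\,d\xi$, where the sign $(-1)^m$ in $\Delta_l$ materialises from $(-1)^{j+m}$ once $(-1)^j$ has been absorbed into the Poisson kernel. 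Combining this with the defect restores the missing $m=0$ term of $\Delta_l$.

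Assembling the pieces, for every $f\in D$ the Wilson Parseval sum equals
\[
\sum_{l\in\Z}\int_\R \hat f(\xi)\overline{\hat f(\xi+\beta^{-1}l)}\,\overline{\Phi_l(\xi)}\,d\xi + \sum_{l\in\Z}\int_\R \hat f(\xi)\overline{\hat f(\xi+\beta^{-1}(l+1/2))}\,\overline{\Delta_l(\xi)}\,d\xi,
\]
which must coincide with $\|f\|^2 = \int|\hat f(\xi)|^2\,d\xi$. Hypothesis (b) kills every $l\neq 0$ term of the first sum and every term of the second, leaving $\int|\hat f|^2\overline{\Phi_0}\,d\xi = \int|\hat f|^2\,d\xi$; Proposition~\ref{KS}(\ref{pl2}) then extends the identity from $D$ to all of $L^2(\R)$, giving (b)$\Rightarrow$(a). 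For the converse, I would localise: testing against $\hat f$ supported in an arbitrary interval of length strictly less than $\beta^{-1}/2$ isolates $\Phi_0$ alone and forces $\Phi_0=1$ a.e., while test functions of the form $\hat f = g_1 + c\,T_{\beta^{-1}l}g_2$ or $\hat f = g_1 + c\,T_{\beta^{-1}(l+1/2)}g_2$ with $g_1,g_2$ of small support and $c\in\C$ arbitrary then isolate individual $\Phi_l$ and $\Delta_l$, forcing the a.e.\ identities in (b). The main obstacle will be the careful bookkeeping of the $(-1)^{j+m}$ signs and the $e^{\pm 2\pi i\beta j\alpha m}$ phases, and verifying that the defect from the doubled lattice at $m=0$ precisely supplies the $m=0$ slice of $\Delta_l$ that the cross-term analysis cannot produce.
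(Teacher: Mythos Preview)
Your proposal is correct and follows essentially the same route as the paper: the paper's Proposition~\ref{dp} establishes exactly your displayed identity $\mathcal{I}(f)=\mathcal{I}_0(f)+\mathcal{I}_1(f)$ (via periodization and Parseval on $\mathbb T$ rather than your distributional Poisson formulas, and with Lemma~\ref{tl1} supplying the interchange justifications), including the same splitting of the $m=0$ lattice-doubled term into $I_0'+I_0''$ to recover the $m=0$ slice of $\Delta_k$. For the converse the paper likewise localises, but first polarises \eqref{ssp} to a bilinear identity and then applies the Lebesgue differentiation theorem with characteristic-function test pairs $\hat f_\delta,\hat g_\delta$, which is the rigorous counterpart of your $g_1+c\,T_{\beta^{-1}l}g_2$ argument.
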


As an immediate consequence of this result we have.

\begin{corollary}\label{cor-par-onb}
 Let $\alpha, \beta >0,$ and $\{\psi_{j,m}\}_{j\in \mathbb Z,  m\in \mathbb N_0}$ is defined by \eqref{dks}.  Suppose that one of the statements (a) or (b) in Theorem~\ref{chwb} hold  (hence all of them hold), then $\{\psi_{j,m}\}_{j\in \mathbb Z,  m\in \mathbb N_0}$ is  an ONB for $L^2(\R)$ if and only if 
 \begin{equation*}
 \begin{cases}
 \|\phi\|_{L^2}= \frac{1}{\sqrt{2\beta}},\\
  \Re    \langle X_{j,m}, Y_{j,m}\rangle=0.
  \end{cases}
  \end{equation*}
\end{corollary}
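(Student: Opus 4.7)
The plan is to invoke Proposition~\ref{KS}(\ref{hwt}): a Parseval frame is an orthonormal basis if and only if each of its elements is a unit vector. Since $\mathcal{W}(\phi,\alpha,\beta)$ is already assumed to be Parseval, the entire content of the corollary reduces to characterizing when $\|\psi_{j,m}\|_{L^2}=1$ for every $j\in\mathbb{Z}$ and $m\in\mathbb{N}_0$.

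I would split the norm computation along the two cases in~\eqref{dks}. When $m=0$, one has $\psi_{j,0}=\sqrt{2\beta}\,\phi(\cdot-2\beta j)$, so translation invariance of the $L^2$ norm gives $\|\psi_{j,0}\|_{L^2}^2=2\beta\,\|\phi\|_{L^2}^2$; demanding this to equal $1$ is exactly the first listed condition $\|\phi\|_{L^2}=1/\sqrt{2\beta}$. When $m\ge 1$, writing $\psi_{j,m}=\sqrt{\beta}\,(X_{j,m}+Y_{j,m})$ with $X_{j,m}$ and $Y_{j,m}$ denoting the two unimodular Gabor summands appearing in~\eqref{dks}, the fact that time-frequency shifts are $L^2$-isometries and that the scalar prefactors in~\eqref{dks} have modulus one yields $\|X_{j,m}\|_{L^2}=\|Y_{j,m}\|_{L^2}=\|\phi\|_{L^2}$. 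Expanding then produces
\[
\|\psi_{j,m}\|_{L^2}^2=\beta\bigl(\|X_{j,m}\|_{L^2}^2+\|Y_{j,m}\|_{L^2}^2+2\,\Re\langle X_{j,m},Y_{j,m}\rangle\bigr)=2\beta\,\|\phi\|_{L^2}^2+2\beta\,\Re\langle X_{j,m},Y_{j,m}\rangle.
\]
Once $\|\phi\|_{L^2}^2=1/(2\beta)$ has been imposed, this simplifies to $1+2\beta\,\Re\langle X_{j,m},Y_{j,m}\rangle$, which equals $1$ precisely when $\Re\langle X_{j,m},Y_{j,m}\rangle=0$.

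Combining the two cases, the pair of conditions in the statement is simultaneously necessary and sufficient for every $\psi_{j,m}$ to have unit norm, and Proposition~\ref{KS}(\ref{hwt}) then converts this into the ONB conclusion. I do not anticipate any real obstacle here: the argument is a direct application of the Parseval-plus-unit-norm criterion together with an elementary expansion of $\|\psi_{j,m}\|_{L^2}^2$. The only point requiring care is the separate treatment of $m=0$ and $m\ge1$, since the scalar factor in front of $\psi_{j,m}$ and its one- versus two-term structure differ between these two cases.
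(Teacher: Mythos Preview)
Your proposal is correct and follows essentially the same approach as the paper: invoke the Parseval-frame-plus-unit-norm criterion (Proposition~\ref{KS}\eqref{hwt}), then compute $\|\psi_{j,m}\|_{L^2}^2$ separately for $m=0$ and $m\ge 1$ via the expansion $\|X_{j,m}+Y_{j,m}\|^2=\|X_{j,m}\|^2+\|Y_{j,m}\|^2+2\Re\langle X_{j,m},Y_{j,m}\rangle$. Your bookkeeping with the $\sqrt{\beta}$ prefactor is in fact slightly more careful than the paper's own write-up.
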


\noindent
In order to  prove Theorem~\ref{chwb}, and for the future reference,  first we  note that  the Fourier transform  $ \widehat{\phi_{j,m}}$ of $\phi_{j,m}$  is 

\begin{eqnarray}\label{fabtf}
\widehat{\phi_{j,m}}(\xi)= e^{-2\pi i \beta j (\xi -\alpha m)}\hat{\phi}(\xi-\alpha m), \ (\xi \in \mathbb R),
\end{eqnarray} 
and the  Fourier transform  $\widehat{\psi_{j,m}}$ of $\psi_{j,m}$ is
\begin{align}\label{fdks}
\widehat{\psi_{j,m}} (\xi) =\begin{cases} \sqrt{2\beta} e^{-4\pi i \beta j  \xi}\hat{\phi}(\xi)
& \text{if} \  \  j\in \mathbb Z, m=0,\\ 
\sqrt{\beta}\left[e^{-2\pi i \beta j \xi }\hat{\phi}(\xi-\alpha m) + (-1)^{j+m}e^{-2\pi i \beta j \xi}\hat{\phi}(\xi+\alpha m)\right] & \text{if} \  \  (j,m)\in \mathbb Z \times \mathbb N.
\end{cases}
\end{align}

\begin{remark} In  \cite{kw2}, using the notations $\phi_{j,m}(x)= e^{ixma}\phi(x-bj)$ where  $a, b>0$, the following Wilson-type system was considered.
\begin{equation}\label{wso}
\psi_{j, m}(x) = \begin{cases}  \phi(x-2bj) &  \text{if} \ j \in \mathbb Z, m=0,\\
2^{-1/2}[\phi_{j,m}(x)+ (-1)^{j+m} \phi_{j, -m}(x)] & \text{if} \ j \in \mathbb Z, m \in \mathbb N.
\end{cases}
\end{equation}
In particular, when $a=\pi$ and $b=1$   \cite[Theorem 1.2]{kw2} which is similar to Theorem~\ref{chwb}   was proved, and it was claimed that the proof extends to all  $a, b>0.$ However, this is not the case because of the choice of coefficients in defining the Wilson-type system~\eqref{wso}. Indeed, in the Fourier domain  (using the normalization $\hat{\psi}(\xi)=\int \psi(x) e^{-i x\cdot \xi} d\xi$), ~\eqref{wso}  becomes 
\begin{equation}\label{wsf}
\widehat{\psi_{j, m}}(\xi) = \begin{cases}  e^{-2ibj \xi}\hat{\phi}(\xi) &  \text{if} \ j \in \mathbb Z, m=0,\\
2^{-1/2}[e^{-ibj (\xi-ma)} \hat{\phi}(\xi -ma)+ (-1)^{j+m} e^{-ibj (\xi+ma)} \hat{\phi}(\xi +ma)] & \text{if} \ j \in \mathbb Z, m \in \mathbb N.
\end{cases}
\end{equation}
When $a=\pi$ and $b=1$  the term $e^{\pm ibj ma} =\pm1$, which is why  \cite[Theorem 1.2]{kw2} holds. However, when $ab\neq \pi$ this is no longer the case and the proposed system cannot be an ONB. 
We resolve this problem by introducing in our proposed Wilson system~\eqref{dks}  where the unimodular term $e^{\pm 2\pi i \beta j \alpha m}$  allows for the cancellations needed to establish our results.  
\end{remark}

The proof of Theorem~\ref{chwb}  will follow from  Lemma  \ref{tl1}, and Proposition \ref{dp}, which we first state and prove. 

First, observe that by  Proposition~\ref{KS}\eqref{pl1},   $\mathcal{W}(\phi, \alpha, \beta)=\{\psi_{j,m}\}_{j\in \mathbb Z,  m\in \mathbb N_0}$ is a Parseval frame for $L^2(\R)$ if and only  for each $f\in L^2(\R),$ we have 
\begin{equation}\label{eq-parse}
\|f\|^2_{L^2}=\sum_{m\in \mathbb N_0} \sum_{j\in \mathbb Z} |\langle f, \psi_{j,m}  \rangle|^2.
\end{equation}
 So by Proposition~\ref{KS}\eqref{pl2},  to establish Theorem~\ref{chwb} it is enough to prove that part (b) is equivalent to~\eqref{eq-parse} for all $f$ belonging to a dense  subset, $\mathcal{D}$ of $L^2(\mathbb R).$  Here and in the sequel, we choose 
\[\mathcal{D}=\left\{ f\in L^2(\mathbb R): \hat{f}\in L^{\infty}(\mathbb R) \ \text{and  support of} \ \hat{f} \ \text{  is a compact subset of} \ \mathbb{R}\setminus \{0\} \right\}.\]

In the next proposition we set $$\mathcal{I}(f)= \sum_{m\in \mathbb N_0} \sum_{j\in \mathbb Z} |\langle f, \psi_{j,m} \rangle|^2,$$
$$\mathcal{I}_0(f)= \int_{\mathbb R} \sum_{k\in \mathbb Z} \hat{f}(\xi+\beta^{-1}k)\overline{\hat{f}(\xi)} \Phi_k(\xi) d\xi,$$
and
$$ \mathcal{I}_1(f)= \int_{\mathbb R} \sum_{k\in \mathbb Z}\overline{\hat{f}(\xi)} \hat{f}(\xi+\beta^{-1}( k +1/2)) \Delta_k(\xi) d\xi.$$
With these notations we have. 
\begin{proposition} \label{dp} Let $\alpha, \beta>0$ and $\phi\in L^2(\mathbb R)$. For any $f\in \mathcal{D}$  we have the  following decomposition
  \begin{eqnarray*}
\mathcal{I}(f)=\mathcal{I}_0(f)+ \mathcal{I}_1(f).
 \end{eqnarray*}
\end{proposition}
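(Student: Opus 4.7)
The plan is to move to the frequency side via Plancherel, express the $j$-summation (for each fixed $m$) as a Parseval identity for Fourier series on the torus of period $1/(2\beta)$, and then unfold the resulting double sum into the two pieces $\mathcal I_0$ and $\mathcal I_1$.

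First I would use \eqref{fdks} to write
$$\langle f,\psi_{j,m}\rangle=\int_{\mathbb R}\hat f(\xi)\,\overline{\widehat{\psi_{j,m}}(\xi)}\,d\xi.$$
For $m=0$ this is $\sqrt{2\beta}\int\hat f(\xi)\overline{\hat\phi(\xi)}e^{4\pi i\beta j\xi}\,d\xi$; for $m\geq1$ it is a sum of two integrals, one with kernel $\overline{\hat\phi(\xi-\alpha m)}$ and one with kernel $(-1)^{j+m}\overline{\hat\phi(\xi+\alpha m)}$, both carrying the common phase $e^{2\pi i\beta j\xi}$. For each $m\geq1$ I would split the sum over $j\in\mathbb Z$ into its even part $j=2\ell$ and odd part $j=2\ell+1$. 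This lets the factor $(-1)^{j}$ be absorbed into a sign between the two $\hat\phi$-terms, producing the combinations
$$G_{m}^{\varepsilon}(\xi)=\hat f(\xi)\bigl(\overline{\hat\phi(\xi-\alpha m)}+(-1)^{m}\varepsilon\,\overline{\hat\phi(\xi+\alpha m)}\bigr),\qquad \varepsilon\in\{+1,-1\},$$
whose Fourier coefficients at frequencies $4\pi\beta\ell\xi$ (possibly modulated by $e^{2\pi i\beta\xi}$) encode $\langle f,\psi_{2\ell,m}\rangle$ and $\langle f,\psi_{2\ell+1,m}\rangle$.

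Next, because $f\in\mathcal D$ forces $\hat f$ to have compact support away from $0$, each $G_m^\varepsilon$ lies in $L^1\cap L^\infty$, so the periodization $\widetilde G_m^\varepsilon(\xi)=\sum_{k\in\mathbb Z}G_m^\varepsilon\bigl(\xi+\tfrac{k}{2\beta}\bigr)$ is an absolutely convergent finite sum at each point. For every fixed $m$, Parseval on $L^2\bigl([0,1/(2\beta))\bigr)$ converts $\sum_{j\in\mathbb Z}|\langle f,\psi_{j,m}\rangle|^2$ (and, for $m\geq 1$, the separate even/odd contributions, recombined) into an integral
$$\int_0^{1/(2\beta)}\widetilde G_m^\varepsilon(\xi)\,\overline{\widetilde G_m^\varepsilon(\xi)}\,d\xi,$$
which then unfolds to $\int_{\mathbb R}\overline{G_m^\varepsilon(\xi)}\,\widetilde G_m^\varepsilon(\xi)\,d\xi$ after one sum is used to cover all of $\mathbb R$.

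Finally I would sum over $m\in\mathbb N_0$ and split the periodization index as $k=2n$ (even) or $k=2n+1$ (odd). The even shifts are by $n\beta^{-1}$ and, after collecting the $\overline{\hat\phi(\xi-\alpha m)}\hat\phi(\xi+n\beta^{-1}-\alpha m)$-type products over $m$, reconstitute exactly the kernel $\Phi_k(\xi)$; these are the contributions to $\mathcal I_0(f)$. The odd shifts are by $(n+\tfrac12)\beta^{-1}$ and, thanks to the relative sign between the even-$j$ and odd-$j$ pieces together with the factor $(-1)^{m}$ generated from the cross term of $G_m^+$ with a translate of $G_m^-$, reconstitute precisely the $(-1)^m$-weighted kernel $\Delta_k(\xi)$; these give $\mathcal I_1(f)$. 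The $m=0$ term fits naturally into the $\varepsilon=+1$ pattern with the convention $\hat\phi(\xi-\alpha\cdot 0)=\hat\phi(\xi+\alpha\cdot 0)$ and the prefactor $\sqrt{2\beta}$ absorbing a factor of $2$ in the periodization.

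The only real obstacle is bookkeeping: one has to verify that the two sign choices $\varepsilon=\pm$ and the parity of $m$ combine with the even/odd split of $k$ to produce exactly $\Phi_k$ and $\Delta_k$ as defined, rather than some rearrangement. The density class $\mathcal D$ is tailored so that every interchange of sum and integral in the argument is justified by absolute convergence.
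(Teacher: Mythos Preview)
Your approach is correct and reaches the same identity, but it is organized differently from the paper's proof. The paper does \emph{not} split the $j$-sum by parity. Instead it expands $|a+(-1)^{j+m}b|^2$ into four terms (yielding $I_0,\dots,I_4$), applies ordinary Parseval on the period-$1$ torus to the diagonal terms $|a|^2,|b|^2$ (equations of the type \eqref{h1}), and handles the alternating cross terms $\sum_j(-1)^j a_j\bar b_j$ separately via the Poisson summation formula (equation \eqref{h2}); the $m=0$ term $I_0$ is then split into integer and half-integer shifts $I_0',I_0''$ and recombined with $I_1+I_4$ and $I_2+I_3$ to produce $\mathcal I_0$ and $\mathcal I_1$.

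Your route---absorb $(-1)^j$ by the even/odd split, apply Parseval on the period-$1/(2\beta)$ torus to the combinations $G_m^{\pm}$ (the odd-$j$ modulation $e^{2\pi i\beta\xi}$ then generates a factor $(-1)^k$ in the periodization), and finally split the periodization index $k$ by parity---is more uniform, since everything is a single Parseval computation rather than Parseval plus Poisson. The price is that the recombination (checking that $(-1)^k$, $(-1)^m$, and the $\pm$ in $G_m^{\pm}$ conspire to give exactly $\Phi_n$ on even $k$ and $\Delta_n$ on odd $k$, with the $m=0$ piece slotting in) is slightly more delicate bookkeeping than in the paper's four-term expansion. Both arguments rely on Lemma~\ref{tl1} (i.e.\ the finiteness guaranteed by $f\in\mathcal D$) to justify the interchanges, which you correctly note.
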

\begin{proof}
Plancherel's Theorem  together with~\eqref{fdks} give  
\begin{eqnarray*}\label{nd1}
 \mathcal{I}(f) & = &  \sum_{m\in \mathbb N_0} \sum_{j\in \mathbb Z} |\langle f, \psi_{j,m} \rangle|^2=  \sum_{m\in \mathbb N_0} \sum_{j\in \mathbb Z} |\langle \hat{f}, \widehat{\psi_{j,m}} \rangle|^2\nonumber \\
& = &  \sum_{j\in \mathbb Z} |\langle \hat{f}, \widehat{\psi_{j,0}} \rangle|^2+\sum_{ m\in \mathbb N} \sum_{j\in \mathbb Z} |\langle \hat{f}, \widehat{\psi_{j,m}} \rangle|^2\nonumber\\
& = &2\beta \sum_{j\in \mathbb Z}  \left| \int_{\mathbb R} \hat{f}(\xi) \overline{\hat{\phi}(\xi)}e^{2\pi i (2\beta j)  \xi} d\xi \right|^2\nonumber \\
&&+ \beta\sum_{ m\in \mathbb N} \sum_{j\in \mathbb Z} \left| \int_{\mathbb R} \hat{f}(\xi) \left[e^{2\pi i \beta j  \xi }\overline{\hat{\phi}(\xi-\alpha m)}
 + (-1)^{j+m}e^{2\pi i \beta j  \xi}\overline{\hat{\phi}(\xi+\alpha m)} \ \right]d\xi \right|^2. 
\end{eqnarray*}
For fix $ m\in \mathbb N,$ set 
\begin{eqnarray}\label{sn}
F_{\alpha m, \beta}(\xi)=\hat{f}(\beta^{-1}\xi)\overline{\hat{\phi}(\beta^{-1}\xi-\alpha m)} \ \  \text{and} \  \ 
F_{0, \beta}(\xi)=\hat{f}((2\beta)^{-1}\xi)\overline{\hat{\phi}((2\beta)^{-1}\xi)}.
\end{eqnarray}

Since  $f\in \mathcal{D}, F_{\alpha m, \beta}$ is compactly supported in  $\mathbb R\setminus \{0\}$ and belongs to $L^1(\mathbb R)\cap L^2(\mathbb R).$ 
By a  simple change of variables and in view of \eqref{sn}, we may rewrite

\begin{align}\label{c1}
\begin{cases}
\int_{\mathbb R} \hat{f}(\xi)\,  \overline{\hat{\phi}(\xi-\alpha m)}\,  e^{2\pi i \beta j  \xi } \, d\xi  &=
  \beta^{-1}\widehat{F_{\alpha m, \beta}}(-j),\\
  \int_{\mathbb R} \hat{f}(\xi)\,  \overline{\hat{\phi}(\xi+\alpha m)} \, e^{2\pi i \beta j  \xi } \, d\xi &= \beta^{-1} \widehat{F_{-\alpha m, \beta}}(-j) ,\\
  \int_{\mathbb R} \hat{f}(\xi) \, \overline{\hat{\phi}(\xi)} \, e^{2\pi i (2\beta)j  \xi } \, d\xi  
&=   (2\beta)^{-1}\widehat{F_{0, \beta}}(-j).
  \end{cases}
 \end{align}

In view of~\eqref{c1}, we can write 
\begin{eqnarray}\label{nd2}
\mathcal{I}(f)& = & \frac{1}{2\beta} \sum_{j\in \mathbb Z}  \left| \widehat{F_{0, \beta}}(-j) \right|^2 \nonumber \\
&&+\frac{1}{\beta}\sum_{ m\in \mathbb N} \sum_{j\in \mathbb Z} \left| \widehat{F_{\alpha m, \beta}}(-j)+ (-1)^{j+m} \widehat{F_{-\alpha m, \beta}}(-j) \right|^2\nonumber\\
& = & \frac{1}{2\beta} \sum_{j\in \mathbb Z} \left| \widehat{F_{0, \beta}}(-j) \right|^2 +\frac{1}{\beta}  \sum_{ m\in \mathbb N} \sum_{j\in \mathbb Z}  I_{m,j}\nonumber\\
& = &  I_0 + I_1 +I_2+I_3+I_4,
\end{eqnarray}
where 
\begin{eqnarray*}
 I_{m,j} & = & \left| \widehat{F_{\alpha m, \beta}}(-j)+ (-1)^{j+m}\widehat{F_{-\alpha m, \beta}}(-j) \right|^2 \\
& =  &  \widehat{F_{\alpha m, \beta}}(-j)\overline{ \widehat{F_{\alpha m, \beta}}(-j)}+ (-1)^{j+m}\widehat{F_{\alpha m, \beta}}(-j)\overline{ \widehat{F_{-\alpha m, \beta}}(-j)}\\
&& + (-1)^{j+m}\widehat{F_{-\alpha m, \beta}}(-j)\overline{ \widehat{F_{\alpha m, \beta}}(-j)} +\widehat{F_{-\alpha m, \beta}}(-j)\overline{ \widehat{F_{-\alpha m, \beta}}(-j)},
\end{eqnarray*}
and $I_{i}$ ($i=0,1,2,3,4$) will be  introduced latter. 

 Using the fact that $\mathbb R= \cup_{k\in \mathbb Z} (k+Q)=\cup_{k\in \mathbb Z} (k+ [-\frac{1}{2}, \frac{1}{2}))$, and that $F_{\alpha m, \beta}$ is compactly supported,  we obtain
\begin{eqnarray}\label{fcc}
\widehat{F_{\alpha m, \beta}}(-j) & = & \int_{ \mathbb R} F_{\alpha m, \beta}(\xi) e^{2\pi i j\xi} d\xi=\sum_{k\in \mathbb Z}\int_{Q -k}F_{\alpha m, \beta}(\xi) e^{2\pi i j \xi} d\xi \nonumber \\
& = & \int_{Q}  \left( \sum_{k\in \mathbb Z}F_{\alpha m, \beta}(\xi+k) \right) e^{2\pi i j (\xi+k)} d\xi\nonumber\\
& = & \int_{\mathbb T}  H(\xi) e^{2\pi i j \xi} d\xi,
\end{eqnarray}
where  $H(\xi)=\sum_{k\in \mathbb Z}F_{\alpha m, \beta}(\xi+k).$
We note  that $H(\xi)$ is a $1-$periodic function, and since $F_{\alpha m, \beta}$ is compactly supported, it follows   that $H\in L^2(\mathbb T)$, and it's Fourier coefficients  are $\widehat{F_{\alpha m, \beta}}(-j)$.
By the  Parseval's theorem, we have
\begin{eqnarray}\label{h1}
\sum_{j\in \mathbb Z} \left| \widehat{F_{\alpha m, \beta}}(-j)\right|^2 & =  & \int_{\mathbb T}\left( \sum_{k\in \mathbb Z} F_{\alpha m, \beta}(\xi +k ) \right)\overline{ \left( \sum_{p\in \mathbb Z} F_{\alpha m, \beta}(\xi+ p ) \right)} d\xi \nonumber \\
& = & \int_{\mathbb R}\left( \sum_{k\in \mathbb Z} F_{\alpha m, \beta}(\xi +k ) \right)\overline{ F_{\alpha m, \beta}(\xi) } d\xi.
\end{eqnarray} In view of \eqref{fcc}, and  by  the Poisson summation formula, we obtain
\begin{eqnarray}\label{h2}
D & := & \sum_{j\in \mathbb Z}(-1)^{j}\widehat{F_{\alpha m, \beta}}(-j)\overline{ \widehat{F_{-\alpha m, \beta}}(-j)} \nonumber\\
& = &  \int_{\mathbb T} \overline{ \left( \sum_{k\in \mathbb Z}F_{-\alpha m, \beta}(\xi+k) \right)} \cdot  \left(\sum_{j\in \mathbb Z}\widehat{F_{\alpha m, \beta}}(-j)e^{-2\pi i j  (1/2+ \xi)} \right) d\xi\nonumber\\
& = & \int_{\mathbb T} \overline{ \left( \sum_{k\in \mathbb Z}F_{-\alpha m, \beta}(\xi+k) \right)}  \cdot  \left(\sum_{j\in \mathbb Z} F_{\alpha m, \beta}(\xi + j  +1/2) \right) d\xi\nonumber\\
& = & \int_{\mathbb R} \overline{ F_{-\alpha m, \beta}(\xi)}  \cdot \left( \sum_{j\in \mathbb Z} F_{\alpha m, \beta}(\xi + j  +1/2) \right) d\xi.
\end{eqnarray}
By a simple  change of variable, \eqref{h1}, and  \eqref{sn},  we have 
\begin{eqnarray*}
I_0 & = & \frac{1}{2\beta} \sum_{j\in \mathbb Z} \left| \widehat{F_{0, \beta}}(-j)\right|^2 =\frac{1}{2\beta} \int_{\mathbb R}\left( \sum_{k\in \mathbb Z} F_{0, \beta}(\xi +k ) \right)\overline{ F_{0, \beta}(\xi) } d\xi\\
& = &  \int_{\mathbb R}\left( \sum_{k\in \mathbb Z} \hat{f}(\xi+ (2\beta)^{-1}k))\overline{\hat{\phi}(\xi + (2\beta)^{-1}k))}\right) \overline{\hat{f}(\xi)}\hat{\phi}(\xi) d\xi\\
& = &  \int_{\mathbb R}\left( \sum_{k\in \mathbb Z} \hat{f}(\xi+ \beta^{-1}k)\overline{\hat{\phi}(\xi + \beta^{-1}k))}\right) \overline{\hat{f}(\xi)}\hat{\phi}(\xi) d\xi\\
&& +  \int_{\mathbb R}\left( \sum_{k\in \mathbb Z} \hat{f}(\xi+ \beta^{-1}(k+1/2))\overline{\hat{\phi}(\xi + \beta^{-1}(k+1/2))}\right) \overline{\hat{f}(\xi)}\hat{\phi}(\xi) d\xi\\
& = & I_0'+I_0''.
\end{eqnarray*}
Similarly, in view of \eqref{h2},  we have 
\begin{eqnarray*}
I_1 & = & \frac{1}{\beta} \sum_{ m \in \mathbb N}\sum_{j\in \mathbb Z} \left| \widehat{F_{\alpha m, \beta}}(-j)\right|^2 \\
& = & \frac{1}{\beta}\sum_{ m \in \mathbb N} \int_{\mathbb R}\left( \sum_{k\in \mathbb Z} F_{\alpha m, \beta}(\xi +k ) \right)\overline{ F_{\alpha m, \beta}(\xi) } d\xi.\\
& = & \frac{1}{\beta} \sum_{ m \in \mathbb N} \int_{\mathbb R}\left( \sum_{k\in \mathbb Z} \hat{f}(\beta^{-1}(\xi+k))\overline{\hat{\phi}(\beta^{-1}(\xi+k)-\alpha m)} \right)\overline{\hat{f}(\beta^{-1}\xi)}\hat{\phi}(\beta^{-1}\xi-\alpha m) d\xi\\
& = & \sum_{ m \in \mathbb N} \int_{\mathbb R}\left( \sum_{k\in \mathbb Z} \hat{f}(\xi+ \beta^{-1}k)\overline{\hat{\phi}(\xi+\beta^{-1}k-\alpha m)} \right)\overline{\hat{f}(\xi)}\hat{\phi}(\xi-\alpha m) d\xi, 
\end{eqnarray*}
and 
\begin{eqnarray*}
I_2 & = &
\frac{1}{\beta}\sum_{ m\in \mathbb N}\sum_{j\in \mathbb Z}(-1)^{j+m}\widehat{F_{\alpha m, \beta}}(-j)\overline{ \widehat{F_{-\alpha m, \beta}}(-j)}\\
 & = &  \frac{1}{\beta}\sum_{ m \in \mathbb N}(-1)^m \int_{\mathbb R} \overline{ F_{-\alpha m, \beta}(\xi)}  \cdot \left( \sum_{j\in \mathbb Z} F_{\alpha m, \beta}(\xi + j  +1/2) \right) d\xi\\
 & = & \frac{1}{\beta} \sum_{ m \in \mathbb N}(-1)^m \int_{\mathbb R}\overline{\hat{f}(\beta^{-1}\xi)}\hat{\phi}(\beta^{-1}\xi+\alpha m) \\
 && \cdot \left( \sum_{j\in \mathbb Z}\hat{f}(\beta^{-1}(\xi + j  +1/2))\overline{\hat{\phi}(\beta^{-1}(\xi + j +1/2)-\alpha m)}
 \right) d\xi\\
 & = &  \sum_{ m \in \mathbb N}(-1)^m \int_{\mathbb R}\overline{\hat{f}(\xi)}\hat{\phi}(\xi+\alpha m) \\
 && \cdot \left( \sum_{j\in \mathbb Z}\hat{f}( \xi +\beta^{-1}( j  +1/2))\overline{\hat{\phi}(\xi+\beta^{-1}( j +1/2)-\alpha m)}
 \right) d\xi.\\
\end{eqnarray*}
By similar arguments, we have 
\begin{eqnarray*}
I_3 & = &
\frac{1}{\beta}\sum_{ m\in \mathbb N}\sum_{j\in \mathbb Z}(-1)^{j+m}\widehat{F_{-\alpha m, \beta}}(-j)\overline{ \widehat{F_{\alpha m, \beta}}(-j)}\\
 & = & \frac{1}{\beta} \sum_{ m \in \mathbb N}(-1)^m \int_{\mathbb R}\overline{\hat{f}(\beta^{-1}\xi)}\hat{\phi}(\beta^{-1}\xi-\alpha m)) \\
 && \cdot \left( \sum_{j\in \mathbb Z}\hat{f}(\beta^{-1}(\xi + j +1/2))\overline{\hat{\phi}(\beta^{-1}(\xi + j   +1/2)+\alpha m)}
 \right) d\xi\\
  & = &  \sum_{m \in \mathbb N}(-1)^m \int_{\mathbb R}\overline{\hat{f}(\xi)}\hat{\phi}(\xi-\alpha m)) \\
 && \cdot \left( \sum_{j\in \mathbb Z}\hat{f}(\xi+ \beta^{-1}( j +1/2))\overline{\hat{\phi}(\xi +\beta^{-1}( j   +1/2)+\alpha m)}
 \right) d\xi,
\end{eqnarray*}
and 
\begin{eqnarray*}
I_4 & = & \frac{1}{\beta}\sum_{ m \in \mathbb N}\sum_{j\in \mathbb Z} \left| \widehat{F_{-\alpha m, \beta}}(-j)\right|^2 \\
& = & \frac{1}{\beta} \sum_{m \in \mathbb N} \int_{\mathbb R}\left( \sum_{k\in \mathbb Z} \hat{f}(\beta^{-1}(\xi+k)\overline{\hat{\phi}(\beta^{-1}(\xi+k)+\alpha m)} \right)\overline{\hat{f}(\beta^{-1}\xi)}\hat{\phi}(\beta^{-1}\xi+\alpha m) d\xi\\
& = &  \sum_{ m \in \mathbb N} \int_{\mathbb R}\left( \sum_{k\in \mathbb Z} \hat{f}(\xi+\beta^{-1}k)\overline{\hat{\phi}(\xi+\beta^{-1}k+\alpha m)} \right)\overline{\hat{f}(\xi)}\hat{\phi}(\xi+\alpha m) d\xi.
\end{eqnarray*}

We shall justify in~Lemma \ref{tl1} below the  change of  the orders of integration and summation in next few steps. Consequently,

\begin{eqnarray*}\label{i3}
I_0'+I_1+ I_4 & = &  \sum_{ m \in \mathbb Z} \int_{\mathbb R}\left( \sum_{k\in \mathbb Z} \hat{f}(\xi+\beta^{-1}k)\overline{\hat{\phi}(\xi+ \beta^{-1}k-\alpha m)} \right)\overline{\hat{f}(\xi)}\hat{\phi}(\xi-\alpha m) d\xi\nonumber\\
& = & \int_{\mathbb R}\left( \sum_{k\in \mathbb Z} \hat{f}(\xi+ \beta^{-1}k)\overline{\hat{f}(\xi)}\right) \sum_{ m \in \mathbb Z} \hat{\phi}(\xi-\alpha m) \overline{\hat{\phi}(\xi+\beta^{-1}k-\alpha m)} d\xi\nonumber\\
& = &  \int_{\mathbb R} \sum_{k\in \mathbb Z} \hat{f}(\xi+\beta^{-1}k)\overline{\hat{f}(\xi)} \Phi_k(\xi) d\xi,
\end{eqnarray*}
and
\begin{eqnarray*}\label{i2}
I_0''+I_2+I_3 & = &  \sum_{ m \in \mathbb Z}(-1)^m \int_{\mathbb R}\overline{\hat{f}(\xi)}\hat{\phi}(\xi+\alpha m)\nonumber \\
 && \cdot \left( \sum_{j\in \mathbb Z}\hat{f}(\xi +\beta^{-1}(\ j  +1/2))\overline{\hat{\phi}(\xi +\beta^{-1}( j +1/2)-\alpha m)}
 \right) d\xi\nonumber\\
 & = & \int_{\mathbb R} \sum_{k\in \mathbb Z}\overline{\hat{f}(\xi)} \hat{f}(\xi +\beta^{-1}(k  +1/2)) \Delta_k(\xi) d\xi.
\end{eqnarray*}
This together  with  \eqref{nd2}, we obtain
\begin{eqnarray*}
\mathcal{I}(f) & = &  \int_{\mathbb R} \sum_{k\in \mathbb Z} \hat{f}(\xi+\beta^{-1}k)\overline{\hat{f}(\xi)} \Phi_k(\xi) d\xi\\
&& + \int_{\mathbb R} \sum_{k\in \mathbb Z}\overline{\hat{f}(\xi)} \hat{f}(\xi +\beta^{-1}( k  +1/2)) \Delta_k(\xi) d\xi.
\end{eqnarray*}
This completes the proof.
\end{proof}
The following technical result justifies the change of the order of integration and summation performed in the proof of   Proposition~\ref{dp}.

\begin{lemma}\label{tl1} Let $\alpha, \beta >0.$  If $f\in \mathcal{D}$ and $\phi \in L^2(\mathbb R),$  then
\begin{eqnarray}\label{tle1}
 \sum_{ m \in \mathbb Z} \int_{\mathbb R}\left( \sum_{k\in \mathbb Z} |\hat{f}(\xi+\beta^{-1}k)| \left| \hat{\phi}(\xi+ \beta^{-1}k-\alpha m) \right|\right) | \hat{f}(\xi)| |\hat{\phi}(\xi-\alpha m)| d\xi < \infty,
\end{eqnarray}
and \begin{eqnarray}\label{tle2}
K & : = &\sum_{ m \in \mathbb Z} \int_{\mathbb R}| \hat{f}(\xi)| |\hat{\phi}(\xi+\alpha m)| \\
&& \cdot  \left( \sum_{j\in \mathbb Z}|\hat{f}(\xi +\beta^{-1}(\ j  +1/2))| \left| \hat{\phi}(\xi +\beta^{-1}( j +1/2)-\alpha m)
 \right|\right) d\xi < \infty.\nonumber
\end{eqnarray}
\end{lemma}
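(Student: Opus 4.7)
The plan is to use compact support of $\hat{f}$ to reduce the $k$-sum to a finite one, then apply the Cauchy--Schwarz inequality on the $m$-sum and again on the $\xi$-integral, leveraging the $\alpha$-periodic majorant
\[ P(\xi) := \sum_{m\in\mathbb{Z}} |\hat{\phi}(\xi-\alpha m)|^2. \]

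Since $f\in\mathcal{D}$, $\hat{f}$ is bounded with $\mathrm{supp}\,\hat{f}\subset[-R,R]$ for some $R>0$, so the product $|\hat{f}(\xi)|\,|\hat{f}(\xi+\beta^{-1}k)|$ is nonzero only when both $\xi$ and $\xi+\beta^{-1}k$ lie in $[-R,R]$, which forces $k$ into the finite set $K_0 := \{k\in\mathbb{Z} : |k| \leq 2R\beta\}$. By Tonelli's theorem we may freely reorder the nonnegative sums and integrals. The crucial property of $P$ is that it is $\alpha$-periodic with $\int_0^\alpha P(\xi)\,d\xi = \|\hat{\phi}\|_2^2 < \infty$, hence $P\in L^1_{\mathrm{loc}}(\mathbb{R})$.

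For each fixed $k\in K_0$, Cauchy--Schwarz on the $m$-sum gives the pointwise bound
\[ \sum_{m\in\mathbb{Z}} |\hat{\phi}(\xi-\alpha m)|\,|\hat{\phi}(\xi+\beta^{-1}k-\alpha m)| \leq P(\xi)^{1/2}\,P(\xi+\beta^{-1}k)^{1/2}. \]
Using $|\hat{f}(\xi)|\,|\hat{f}(\xi+\beta^{-1}k)| \leq \|\hat{f}\|_\infty^2\,\chi_{[-R,R]}(\xi)$ and Cauchy--Schwarz on the integral then gives a finite bound in terms of $\int_{-R-|\beta^{-1}k|}^{R+|\beta^{-1}k|} P(\xi)\,d\xi$, which is finite by local integrability of $P$. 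Summing the resulting finite bounds over $k\in K_0$ establishes \eqref{tle1}.

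Inequality \eqref{tle2} follows by the same scheme. The relabeling $m\mapsto -m$ shows that $\sum_m |\hat{\phi}(\xi+\alpha m)|\,|\hat{\phi}(\xi+\beta^{-1}(j+1/2)-\alpha m)|$ is still controlled by $P(\xi)^{1/2}\,P(\xi+\beta^{-1}(j+1/2))^{1/2}$; the compact support of $\hat{f}$ again restricts the shift parameter $j$ to finitely many values (those with $|j+1/2| \leq 2R\beta$) and the same two applications of Cauchy--Schwarz finish the argument. There is no genuine obstacle: the only care required is to apply Cauchy--Schwarz in the correct direction so that $\alpha$-periodicity of $P$ can be exploited to extract finiteness from $\|\hat{\phi}\|_2^2$.
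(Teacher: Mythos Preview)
Your argument is correct. Both approaches begin by using the compact support of $\hat f$ to reduce the shift index ($k$, respectively $j$) to a finite range, but diverge from there. The paper applies the elementary inequality $2|ab|\le |a|^2+|b|^2$ to the product $\hat\phi(\xi-\alpha m)\,\hat\phi(\xi+\beta^{-1}k-\alpha m)$, then performs the change of variable $\xi\mapsto\xi+\alpha m$ so that the $\hat\phi$-factor separates out as $|\hat\phi(\xi)|^2$; the remaining double sum over $k,m$ of $|\hat f(\xi+\alpha m+\beta^{-1}k)|\,|\hat f(\xi+\alpha m)|$ is then shown to be bounded uniformly in $\xi$ by a counting argument (Lemmas~\ref{tl2} and~\ref{tl3}), yielding a bound of order $\|\hat f\|_\infty\,\|\hat\phi\|_2^2$. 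You instead apply Cauchy--Schwarz in the $m$-variable to bring in the $\alpha$-periodic majorant $P(\xi)=\sum_m|\hat\phi(\xi-\alpha m)|^2$, whose local integrability (equivalent to $\|\hat\phi\|_2^2<\infty$) carries all the $\phi$-information; a second Cauchy--Schwarz in $\xi$ then closes the estimate. Your route is slightly more streamlined in that it avoids the auxiliary counting lemmas and the change of variable, at the cost of giving a somewhat less explicit final bound. Either way the essential input is the same: finitely many shifts from compact support of $\hat f$, and square-integrability of $\hat\phi$.
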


To prove Lemma \ref{tl1} \eqref{tle1}, it suffices to show that 
\begin{eqnarray}\label{stl2}
\int_{\mathbb R} \sum_{k\in \mathbb Z} \sum_{ m \in \mathbb Z}   |\hat{f}(\xi+ \beta^{-1}k)| \left| \hat{f}(\xi) \right| |\hat{\phi}(\xi-\alpha m) |^2 d\xi< \infty.
\end{eqnarray}
This is because 
\[ \left |2 \hat{\phi}(\xi-\alpha m) \hat{\phi}(\xi+\beta^{-1}k-\alpha m)\right|\leq |\hat{\phi}(\xi-\alpha m) |^2+ \left|\hat{\phi}(\xi+\beta^{-1}k-\alpha m)\right|^2.\]

We remark that the summation involving $\left|\hat{\phi}(\xi+\beta^{-1}k-\alpha m)\right|^2$ reduced to \eqref{stl2} via the change of variable $\xi\mapsto \xi -\beta^{-1}k .$ And \eqref{stl2} is an immediate consequence of the following lemma

\begin{lemma}\label{tl2} Suppose $0<a< b < \infty, \hat{f}\in L^{\infty}(\mathbb R), $ and $   \text{supp} \ \hat{f} \subset \{ \xi : a< |\xi|< b \},$  then 
\[ \Sigma(\xi)= \sum_{k, m \in \mathbb Z}  |\hat{f}(\xi+ \beta^{-1}k+\alpha m)| | \hat{f}(\xi+\alpha m) | \lesssim \|\hat{f}\|_{L^{\infty}},\]
for almost every $\xi \in \mathbb R.$ 
 \end{lemma}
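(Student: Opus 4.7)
The plan is to exploit the compact support condition $\operatorname{supp}\,\hat{f} \subset \{a < |\xi| < b\}$ to reduce $\Sigma(\xi)$ to a finite sum with an absolute bound on the number of nonzero terms that is uniform in $\xi$. A pair $(k,m)$ contributes non-trivially only when simultaneously
$$a < |\xi + \alpha m| < b \qquad \text{and} \qquad a < |\xi + \beta^{-1}k + \alpha m| < b,$$
so everything reduces to counting how many $(k,m)$ satisfy both of these constraints for a given $\xi$.

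First I would bound the admissible $m$. For fixed $\xi$, the points $\{\xi + \alpha m : m \in \mathbb Z\}$ form an arithmetic progression with step $\alpha$, and the first constraint forces $\xi + \alpha m$ to lie in the set $(-b,-a)\cup(a,b)$, which has total length $2(b-a)$. Hence the number of admissible $m$ is at most $N_\alpha := 2(\lfloor (b-a)/\alpha\rfloor + 1)$, a constant independent of $\xi$.

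Next, for each such $m$ I would bound the admissible $k$ by the identical argument applied to the arithmetic progression $\{\xi + \alpha m + \beta^{-1}k : k \in \mathbb Z\}$ of step $\beta^{-1}$: the number of $k$ for which $\xi + \alpha m + \beta^{-1}k$ falls in $(-b,-a)\cup(a,b)$ is at most $N_\beta := 2(\lfloor (b-a)\beta\rfloor + 1)$, again uniformly in $\xi$ and $m$.

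Combining these two counts, $\Sigma(\xi)$ contains at most $N_\alpha N_\beta$ nonzero summands for a.e.\ $\xi$, and each summand is bounded by $\|\hat{f}\|_{L^\infty}^2$. This yields $\Sigma(\xi) \leq N_\alpha N_\beta \|\hat{f}\|_{L^\infty}^2$, giving the stated estimate with implicit constant $C = C(\alpha,\beta,a,b)\|\hat{f}\|_{L^\infty}$. No genuine obstacle arises here; the lemma is a bookkeeping statement recording that the support hypothesis makes the lattice sum locally finite, and it is exactly this finiteness that in turn justifies the change of order of summation and integration invoked in Proposition~\ref{dp}.
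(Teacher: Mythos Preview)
Your proof is correct and follows essentially the same counting argument as the paper: both exploit the compact support of $\hat{f}$ to show that only finitely many $(k,m)$ contribute, uniformly in $\xi$. The paper organizes the count slightly differently (first bounding the admissible $k$ via the difference constraint $|\beta^{-1}k|\le b-a$ when both points lie in the same component, then bounding $\sum_m|\hat f(\xi+\alpha m)|$), but the substance is identical.
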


 \begin{proof}
When $|\beta^{-1}k|>\delta=b-a,$ we have $\left| \hat{f}(\xi+ \beta^{-1}k+\alpha m) \hat{f}(\xi+\alpha m) \right|=0.$ Then
\begin{eqnarray*}
\Sigma(\xi) &  \leq  & \sum_{m \in \mathbb Z} \sum_{|\beta^{-1}k|\leq  \delta} \left| \hat{f}(\xi+ \beta^{-1}k+\alpha m) \hat{f}(\xi+\alpha m) \right|\\
& \lesssim & C_{\delta} \sum_{m\in \mathbb Z}\left| \hat{f}(\xi+\alpha m)\right| \|\hat{f}\|_{L^{\infty}} \lesssim \|\hat{f}\|_{L^{\infty}}.
\end{eqnarray*}
\end{proof}
Since
\[ \left |2 \hat{\phi}(\xi+\alpha m) \hat{\phi}(\xi+\beta^{-1}(k+1/2)-\alpha m)\right|\leq |\hat{\phi}(\xi+\alpha m) |^2+ \left| \hat{\phi}(\xi+\beta^{-1}(k+1/2)-\alpha m)\right|^2,\]
we note,  to prove Lemma \ref{tl1} \eqref{tle2}, it suffices to prove
\begin{eqnarray}\label{stl3}
\int_{\mathbb R} \sum_{k\in \mathbb Z} \sum_{ m \in \mathbb Z} |\hat{f}(\xi+ \beta^{-1}(k+1/2))| |\hat{f}(\xi)| |\hat{\phi}(\xi+\alpha m) |^2 d\xi< \infty.
\end{eqnarray}

It is clear that  the summation involving $\left|\hat{\phi}(\xi+\beta^{-1}(k+1/2)-\alpha m)\right|^2$ reduces to \eqref{stl3} via the change of variable $\xi\mapsto \xi -\beta^{-1}(k+1/2) .$ And \eqref{stl3} is an immediate consequence of the following lemma

\begin{lemma}\label{tl3} Suppose $0<a< b < \infty, \hat{f}\in L^{\infty}(\mathbb R), \  \text{supp} \ \hat{f} \subset \{ \xi : a < |\xi|< b \},$  then 
\[ \Sigma(\xi)= \sum_{k, m \in \mathbb Z}  |\hat{f}(\xi+ \beta^{-1}(k+1/2)-\alpha m)| |\hat{f}(\xi-\alpha m)| \lesssim \|\hat{f}\|_{L^{\infty}},\]
for almost every $\xi \in \mathbb R.$ 
 \end{lemma}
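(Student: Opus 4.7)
The plan is to mirror the argument of Lemma \ref{tl2}, since Lemma \ref{tl3} differs only by the shift $\beta^{-1}k \mapsto \beta^{-1}(k+1/2)$, and the essential mechanism—compact support of $\hat f$—is identical. I first observe that $\mathrm{supp}\,\hat f \subset \{a<|\xi|<b\}$ has diameter at most $2b$. Hence both points $\xi-\alpha m$ and $\xi+\beta^{-1}(k+1/2)-\alpha m$ can lie in $\mathrm{supp}\,\hat f$ simultaneously only when the distance $|\beta^{-1}(k+1/2)|$ between them is at most $2b$. This restricts $k$ to a finite set $K_\beta=\{k\in\Z:|k+1/2|\le 2b\beta\}$, whose cardinality depends only on $\beta$ and $b$.

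Having confined the $k$-sum, I would estimate
\[
\Sigma(\xi) \;\le\; \sum_{k\in K_\beta}\sum_{m\in\Z}\bigl|\hat f(\xi+\beta^{-1}(k+1/2)-\alpha m)\bigr|\,\bigl|\hat f(\xi-\alpha m)\bigr|,
\]
and bound the first factor pointwise by $\|\hat f\|_{L^\infty}$. For each fixed $\xi$, the set of $m$ for which $\xi-\alpha m \in \mathrm{supp}\,\hat f$ has cardinality at most $2(b-a)/\alpha + O(1)$, again uniformly in $\xi$. This produces the desired bound
\[
\Sigma(\xi) \;\lesssim\; C_{\alpha,\beta,a,b}\,\|\hat f\|_{L^\infty},
\]
in direct parallel to the conclusion of Lemma \ref{tl2}.

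The only point requiring any verification is that the half-integer offset does not spoil the finiteness of $K_\beta$: since $k+1/2\neq 0$ forces $|k+1/2|\ge 1/2$, the condition $|\beta^{-1}(k+1/2)|\le 2b$ still cuts off all but finitely many $k$. No other step of the previous proof is sensitive to whether the shift is integer or half-integer, so the argument carries over verbatim. There is no substantive obstacle; the lemma is effectively a restatement of Lemma \ref{tl2} with a translated lattice, and the proof is a two-line adaptation.
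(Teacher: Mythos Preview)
Your proposal is correct and follows exactly the approach the paper intends: the paper omits the proof of Lemma~\ref{tl3} entirely, stating only that it is similar to that of Lemma~\ref{tl2}, and your argument is precisely that adaptation. The only cosmetic difference is that you bound the admissible shifts by the diameter $2b$ of $\mathrm{supp}\,\hat f$, whereas the paper's proof of Lemma~\ref{tl2} uses $\delta=b-a$; your choice is in fact the safer one given that the support has two components, but either way only finitely many $k$ survive and the rest of the argument is identical.
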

 \begin{proof}
Since the proof is similar to that of  Lemma \ref{tl2} we will omit it.
\end{proof}

\begin{proof}[\bf Proof of Lemma~\ref{tl1}]
Lemma \ref{tl1} follows from the   observations we made above, together with  Lemmas \ref{tl2} and \ref{tl3}.
\end{proof}

\begin{remark}\label{r2} As a consequence of  Lemma \ref{tl1}, we may conclude that $\Phi_k, \Delta_k \in L^1_{loc}(\mathbb R).$ Indeed,
\begin{enumerate}
\item  Taking  $\hat{f}= \chi_K$, where $K\subset \mathbb R$ is any compact set, and  fixing $k_0\in \mathbb Z,$   by Lemma \ref{tl1} we obtain
\begin{eqnarray*}
  \int_{K}\left( \sum_{m\in \mathbb Z} |\hat{\phi}(\xi-\alpha m)|   \left|\overline{\hat{\phi}(\xi+ \beta^{-1}k_0-\alpha m)} \right|\right)   \chi_{K}(\xi+\beta^{-1}k_0)d\xi < \infty.
\end{eqnarray*}
It follows that ${\overline{\Phi_{-k_0}}} \in L^1_{loc}(\mathbb R),$ and so $\Phi_{k_0}\in L^{1}_{loc}(\mathbb R).$

\item  Taking  $\hat{f}= \chi_K$, where $K\subset \mathbb R$ is any compact set, and  fixing $k_0\in \mathbb Z,$   by Lemma \ref{tl1} we obtain
\begin{eqnarray*}
\int_{K}   \left( \sum_{m\in \mathbb Z} |\hat{\phi}(\xi+\alpha m)|  \left|\overline{\hat{\phi}(\xi +\beta^{-1}( k_0 +1/2)-\alpha m)} 
 \right|\right) \chi_{K}(\xi +\beta^{-1}( k_0  +1/2))d\xi < \infty.\nonumber
\end{eqnarray*}
It follows that   $\Delta_{k_0}(\xi -\beta^{1}(k_0+1/2)) \in L^1_{loc}(\mathbb R),$ and so   $\Delta_{k} \in L^{1}_{loc}(\mathbb R).$
\end{enumerate}
\end{remark}

We are now ready to prove Theorem~\ref{chwb}.

\begin{proof}[Proof of Theorem~\ref{chwb}] {(\bf $(b) \implies (a)$}).
Assume that 
$\Phi_k(\xi)= \delta_{k,0}$ a.e. for each $k\in \mathbb Z,$
and $\Delta_k(\xi)=0$  a.e. for  each $k\in \mathbb Z$. Then by Proposition \ref{dp}, it follows that $$\|f\|^2_{L^2}=\sum_{m\in \mathbb N_0} \sum_{j\in \mathbb Z} |\langle f, \psi_{j,m}  \rangle|^2 $$
for all $f\in \mathcal{D}.$ By Proposition \ref{KS}\eqref{pl2}, we may conclude  that the above equality holds for all $f\in L^2(\mathbb R).$ This proves that statement (b) implies statement (a).

\noindent  ({\bf $(a) \implies (b)$}).
\noindent Suppose that (a) holds. Therefore, by Proposition \ref{dp} we have 

\begin{eqnarray}\label{rc1}
\|f\|_{L^2}^2= \sum_{j, m}|\langle f, \psi_{j,m}\rangle|^2= \int_{\mathbb R} \sum_{k\in \mathbb Z} \hat{f}(\xi+\beta^{-1}k)\overline{\hat{f}(\xi)} \Phi_k(\xi) d\xi+\mathcal{I}_1(f)
\end{eqnarray}
for all $f\in \mathcal{D}.$

Let $\xi_0 \in \mathbb R\setminus \mathbb Z.$  Choose $\epsilon>0$ so that $B_{\epsilon}(\xi_0)\cap \mathbb Z=(\xi_{0}-\epsilon, \xi_0+\epsilon)\cap \mathbb Z= \emptyset,$ and set $\hat{f}= \chi_{B_{\epsilon}(\xi_0)}.$  Then for $\xi\in B_{\epsilon}(\xi_0),$ by \eqref{rc1} , we have $\Phi_0(\xi)= 1.$ Since $\xi_0$ is arbitrary, we have  $\Phi_0=1$ a.e.. Since $\Phi_0=1$ a.e., \eqref{rc1} gives
\begin{equation}\label{ssp}
 0 = \int_{\mathbb R} \sum_{0\neq k\in \mathbb Z} \hat{f}(\xi+\beta^{-1}k)\overline{\hat{f}(\xi)} \Phi_k(\xi) d\xi  + \int_{\mathbb R} \sum_{k\in \mathbb Z}\overline{\hat{f}(\xi)} \hat{f}(\xi+\beta^{-1}( k +1/2)) \Delta_k(\xi) d\xi.
\end{equation}
We claim that $\Phi_k=0$ a.e. for all $0\neq k \in \mathbb Z$ and $\Delta_k=0$  a.e. for all $k\in \mathbb Z$.

By a polarization argument (see e.g. \cite[p.362, Section 7.1]{hw}) of~\eqref{ssp}   we obtain
\begin{equation}\label{api}
0= \int_{\mathbb R} \sum_{0\neq k\in \mathbb Z} \hat{g}(\xi+\beta^{-1}k)\overline{\hat{f}(\xi)} \Phi_k(\xi) d\xi + \int_{\mathbb R} \sum_{k\in \mathbb Z}\overline{\hat{f}(\xi)} \hat{g}(\xi+\beta^{-1}( k +1/2)) \Delta_k(\xi) d\xi
\end{equation}
for all $f,g \in \mathcal{D}.$

Let us fix  $k_0\neq 0$ and choose a point $\xi_0$ of differentiability of the integral of $\Phi_{k_0}$ such that  $0\neq \xi_0\neq \xi_0 +\beta^{-1}k_0.$  By Remark \ref{r2}, we have  $\Phi_{k_0}\in L^1_{loc}(\mathbb R)$. Hence, almost every point of $\mathbb R$ is point of differentiability 
of the integral of $\Phi_{k_0}.$  This means, if $\xi_0$ is such a point, by Lebesgue differentiation theorem, we have

\begin{eqnarray}\label{aldt}
\lim_{\delta\to 0}\frac{1}{\mu(B_{\delta}(\xi_0))} \int_{\mathbb R}  \Phi_{k_{0}}(\xi) d\xi= \Phi_{k_0}(\xi_0).
\end{eqnarray}

We  consider $\delta>0$ sufficiently small so that both $B_{\delta}(\xi_0)$ and $B_{\delta}(\xi_0+ k_0)$ lie within $\mathbb{R}\setminus \{0\}$. Let $f_{\delta}$ and $g_{\delta}$ in $\mathcal{D}$ be functions such that
\[ \hat{f}_{\delta}(\xi)=\frac{1}{\sqrt{\mu(B_{\delta}(\xi_0))}} \chi_{B_{\delta}(\xi_0)}(\xi),\]
and
\[ \hat{g}_{\delta}(\xi)=\frac{1}{\sqrt{\mu(B_{\delta}(\xi_0))}} \chi_{B_{\delta}(\xi_0+\beta^{-1}k_0)}(\xi).\]
Note that $\hat{g}_{\delta}(\xi)=\hat{f}_{\delta}(\xi-\beta^{-1}k_0)$ and
\begin{eqnarray}\label{ob1}
\overline{\hat{f}_{\delta}(\xi)}\hat{g}_{\delta}(\xi+\beta^{-1}k_0)=\frac{1}{\mu(B_{\delta}(\xi_0))} \chi_{B_{\delta}(\xi_0)}(\xi).
\end{eqnarray} 
Substituting   $f_{\delta}, g_{\delta}$ in~\eqref{api}, and using~\eqref{ob1}, we obtain
\begin{eqnarray*}
0 & = & \int_{\mathbb R}  \hat{g}_{\delta}(\xi+\beta^{-1}k_0)\overline{\hat{f}_{\delta}(\xi)} \Phi_{k_{0}}(\xi) d\xi + \int_{\mathbb R} \sum_{ k\neq 0, k_0} \hat{g}_{\delta}(\xi+\beta^{-1}k)\overline{\hat{f}_{\delta}(\xi)} \Phi_k(\xi) d\xi\\
& = & \frac{1}{\mu(B_{\delta}(\xi_0))} \int_{B_{\delta}(\xi_0)}  \Phi_{k_{0}}(\xi) d\xi + \int_{\mathbb R} \sum_{ k\neq 0, k_0} \hat{g}_{\delta}(\xi+\beta^{-1}k)\overline{\hat{f}_{\delta}(\xi)} \Phi_k(\xi) d\xi\\
&& + \int_{\mathbb R} \sum_{k\in \mathbb Z}\overline{\hat{f}_{\delta}(\xi)} \hat{g}(\xi+\beta^{-1}( k +1/2)) \Delta_k(\xi) d\xi\\
& = & \frac{1}{\mu(B_{\delta}(\xi_0))} \int_{B_{\delta}(\xi_0)}  \Phi_{k_{0}}(\xi) d\xi + J_{\delta}+P_{\delta}.
\end{eqnarray*}
By~\eqref{aldt}, to establish  that $\Phi_{k_0}(\xi_0)=0$, it suffices to prove that \[\lim_{\delta\to 0} J_{\delta}=\lim_{\delta \to 0} P_{\delta}=0.\]
Assume that  $\overline{\hat{f}_{\delta}}(\xi)\hat{g}_{\delta}(\xi +\beta^{-1}k)=\overline{\hat{f}_{\delta}}(\xi)\hat{f}_{\delta}(\xi +\beta^{-1}(k-k_0))\neq 0,$ for some $k\neq k_0$.  Then $|\xi-\xi_0|< \delta$ and $|\xi+\beta^{-1}(k-k_0)-\xi_0|< \delta.$ But this implies we have 
\[|\beta^{-1}(k-k_0)|=|\xi+ \beta^{-1}(k-k_0)-\xi_0- \xi+\xi_0|\leq 2\delta.\]
Taking $\delta\to 0$, we obtain $k=k_0$ which is  a contradiction. Therefore
$\overline{\hat{f}_{\delta}}(\xi)\hat{g}_{\delta}(\xi +\beta^{-1}k)=0$ for all $k\neq k_0.$ It follows that $J_{\delta}\to 0$ as $\delta \to 0.$

Suppose that for some $k\neq k_0,$ we have $\overline{\hat{f}_{\delta}}(\xi)\hat{g}_{\delta}(\xi +\beta^{-1}(k+1/2))=\overline{\hat{f}_{\delta}}(\xi)\hat{f}_{\delta}(\xi +\beta^{-1}(k-k_0)+(2\beta)^{-1})\neq 0.$  Then $|\xi-\xi_0|< \delta$ and $|\xi+\beta^{-1}(k-k_0) + (2\beta)^{-1}-\xi_0|< \delta.$ But this implies we have 
\[|\beta^{-1}(k-k_0)|\leq |\beta^{-1}(k-k_0)+(2\beta)^{-1}| = |\xi+ \beta^{-1}(k-k_0)+(2\beta)^{-1}-\xi_0- \xi+\xi_0|\leq 2\delta.\]
Taking $\delta\to 0$, we get  $k=k_0,$ which is a contradiction.

Next, assume that  for $k=k_0$,  $\overline{\hat{f}_{\delta}}(\xi)\hat{g}_{\delta}(\xi +\beta^{-1}(k+1/2))=\overline{\hat{f}_{\delta}}(\xi)\hat{f}_{\delta}(\xi +(2\beta)^{-1})\neq 0. $  Then $|\xi-\xi_0|< \delta$ and $|\xi+ (2\beta)^{-1}-\xi_0|< \delta.$ But this implies we have 
\[|(2\beta)^{-1}| = |\xi+(2\beta)^{-1}-\xi_0- \xi+\xi_0|\leq 2\delta.\]
Taking $\delta\to 0$, we get   a contradiction. Therefore
$\overline{\hat{f}_{\delta}}(\xi)\hat{g}_{\delta}(\xi +\beta^{-1}(k+1/2))=0$ for all $k\in \mathbb Z.$ It follows that $P_{\delta}\to 0$ as $\delta \to 0.$ Since $k_0$ is arbitrary,  we have $\Phi_{k}(\xi)=0$ for $0\neq k \in \mathbb Z.$

The proof that  $\Delta_k=0$  a.e. for all $k\in \mathbb Z$ is similar to the above using the functions $f_{\delta}$ and $g_{\delta}$ in $\mathcal{D}$ be defined by 
\[ \hat{f}_{\delta}(\xi)=\frac{1}{\sqrt{\mu (B_{\delta}(\xi_0))}} \chi_{B_{\delta}(\xi_0)}(\xi),\]
and
\[ \hat{g}_{\delta}(\xi)=\frac{1}{\sqrt{\mu (B_{\delta}(\xi_0))}} \chi_{B_{\delta}(\xi_0+\beta^{-1}(k_0+1/2))}(\xi).\]
\end{proof}

We can now prove Corollary~\ref{cor-par-onb}

\begin{proof}[Proof of Corollary~\ref{cor-par-onb}
] Suppose that  $$\begin{cases}\|\phi\|_{L^2}= \frac{1}{\sqrt{2\beta}}\\ \Re \langle X_{j,m}, Y_{j,m}\rangle=0\end{cases}$$  for all $(j, m) \in \mathbb Z \times \mathbb N.$ Then, we have  $\|\psi_{j,0}\|_{L^2}= \sqrt{2\beta}\|\phi\|_{L^2}=1$ for $j\in \mathbb Z,$ and 
\begin{eqnarray*}
\|\psi_{j,m}\|^2_{L^2} & = &  \|X_{j,m}\|^2_{L^2}+\|Y_{j,m}\|^2_{L^2}+ 2\Re \langle X_{j,m}, Y_{j,m}\rangle \\
& = & 2\beta \|\phi\|_{L^2}^2 + 2\Re \langle X_{j,m}, Y_{j,m}\rangle =1. 
\end{eqnarray*}
The converse easily follows. 

\end{proof}

\section{Parseval Wilson frames}\label{cgfwb} 
In this section we connect Gabor tight frames to the Wilson systems we defined. In particular, one of our main result is Theorem~\ref{wg} from which Theorem~\ref{thm-main1} follows.

\subsection{From tight Gabor frames to Parseval Wilson frames}
We can now state and prove a result that links Gabor frames to the Wilson systems defined in the Introduction.

\begin{theorem} \label{wg}  Let $\phi \in L^2(\mathbb R)$ and $\alpha, \beta >0$. The following two statements are equivalent.

\begin{enumerate}
\item[(a)] The  Gabor system  $\mathcal{G}(\phi, \alpha, \beta)$ is a  tight frame for $L^2(\mathbb R)$   with frame bound  $\beta^{-1}$, and $\Delta_k=0$  a.e. for all $k\in  \mathbb Z$, where $\Delta_k$ was defined in Theorem~\ref{chwb}. 
\item[(b)] The Wilson system $\mathcal{W}(\phi, \alpha, \beta)$ is a Parseval  frame for $L^2(\mathbb R)$.
\end{enumerate}
%
\end{theorem}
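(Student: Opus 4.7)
The proof of Theorem~\ref{wg} should be an immediate synthesis of Proposition~\ref{wcza} with Theorem~\ref{chwb}. The plan is to observe that the condition $\Phi_k(\xi) = \delta_{k,0}$ a.e. for all $k \in \mathbb Z$, which is one of the two conditions characterizing Parseval Wilson frames in Theorem~\ref{chwb}(b), is identical to the Chui--Casazza--Janssen/Wexler--Raz condition of Proposition~\ref{wcza} characterizing tight Gabor frames with redundancy $\beta^{-1}$. Thus Theorem~\ref{chwb}(b) decomposes cleanly into a \emph{Gabor-tightness half} (the $\Phi_k$ condition) and an \emph{off-diagonal cancellation half} (the $\Delta_k$ condition), and the present theorem merely repackages this decomposition.

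For the implication $(a) \Rightarrow (b)$, I would begin by invoking Proposition~\ref{wcza} on the assumed tight Gabor frame with bound $\beta^{-1}$ to conclude $\Phi_k = \delta_{k,0}$ a.e. for every $k \in \mathbb Z$. Combined with the hypothesis that $\Delta_k = 0$ a.e. for every $k$, both conditions of Theorem~\ref{chwb}(b) are in force, so Theorem~\ref{chwb} yields that $\mathcal{W}(\phi,\alpha,\beta)$ is a Parseval frame for $L^2(\mathbb R)$.

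For the converse $(b) \Rightarrow (a)$, if $\mathcal{W}(\phi,\alpha,\beta)$ is a Parseval frame then Theorem~\ref{chwb} supplies \emph{both} $\Phi_k = \delta_{k,0}$ a.e. and $\Delta_k = 0$ a.e. for every $k \in \mathbb Z$. Applying Proposition~\ref{wcza} to the first relation gives that $\mathcal{G}(\phi,\alpha,\beta)$ is a tight frame with frame bound $\beta^{-1}$; the second relation is precisely the auxiliary hypothesis appearing in $(a)$.

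Since all of the analytic content (the splitting $\mathcal{I}(f) = \mathcal{I}_0(f) + \mathcal{I}_1(f)$, the Fubini-type interchanges justified by Lemma~\ref{tl1}, and the Lebesgue-differentiation/polarization argument used to extract $\Phi_k$ and $\Delta_k$ individually from the global identity) has already been carried out in Section~\ref{b3}, there is no genuine obstacle to the proof of Theorem~\ref{wg}. The role of this theorem is simply to isolate the purely Gabor content of the Parseval Wilson condition from the extra symmetry/cancellation encoded by $\Delta_k$; this isolation will be essential in the next section, where constructing Schwartz-class generators $\phi$ reduces to enforcing $\Delta_k \equiv 0$ on top of the already well-studied tight Gabor frame condition.
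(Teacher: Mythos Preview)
Your proposal is correct and matches the paper's own proof essentially line for line: both directions are obtained by combining Proposition~\ref{wcza} (which identifies the condition $\Phi_k=\delta_{k,0}$ a.e.\ with the Gabor system being tight with bound $\beta^{-1}$) with Theorem~\ref{chwb}. There is nothing to add or correct.
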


\begin{proof}[Proof of Theorem~\ref{wg}]
{\bf $((a)\implies (b)).$} 
Assume that (a) holds.  By Proposition~\ref{wcza}, if $\mathcal{G}(\phi, \alpha, \beta)$  is a tight frame with frame bound $\beta^{-1},$   then $\Phi_k(\xi)=0$ a.e. for all $k\in \mathbb Z$. Together with the second condition of (a) we conclude using Theorem~\ref{chwb} that (b) holds.

\noindent {\bf $((b)\implies (a)).$} The converse follows from Theorem~\ref{chwb}, and Proposition~\ref{wcza}.
\end{proof}
The following consequence easily follows from Theorem~\ref{wg}.

\begin{corollary}\label{wonb}
Let $\phi \in L^2(\mathbb R)$ and $\alpha, \beta >0$. Let $X_{j,m}$ and $Y_{j,m}$ be defined by 
\begin{equation*}
\begin{cases}
X_{j,m}=e^{-2\pi i \beta j \alpha m}\phi_{j,m},\\
Y_{j,m}= (-1)^{j+m}e^{2\pi i \beta j \alpha m} \phi_{j,-m}.
\end{cases}
\end{equation*} Suppose that the  Gabor system  $\mathcal{G}(\phi, \alpha, \beta)$ is a  tight frame for $L^2(\mathbb R)$   with frame bound  $\beta^{-1}$, and $\Delta_k=0$  a.e. for all $k\in  \mathbb Z.$   Then, the Wilson system $\mathcal{W}(\phi, \alpha, \beta)$ is an orthonormal basis for $L^2(\mathbb R)$ if and only if $$\begin{cases}\|\phi\|_{L^2}= \frac{1}{\sqrt{2\beta}}\\ \Re \langle X_{j,m}, Y_{j,m}\rangle=0\end{cases}$$  for all $(j, m) \in \mathbb Z \times \mathbb N.$

\end{corollary}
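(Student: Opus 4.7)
The plan is to deduce this corollary by composing the two equivalences already established in the paper, namely Theorem~\ref{wg} and Corollary~\ref{cor-par-onb}. The hypothesis of Corollary~\ref{wonb} is exactly statement (a) of Theorem~\ref{wg}, so that theorem immediately yields statement (b) of Theorem~\ref{wg}: the Wilson system $\mathcal{W}(\phi,\alpha,\beta)$ is a Parseval frame for $L^2(\R)$. This reduces the current corollary to the question: when is a Parseval Wilson frame of this form actually an ONB?

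That question is answered precisely by Corollary~\ref{cor-par-onb}, which says a Parseval Wilson frame $\mathcal{W}(\phi,\alpha,\beta)$ is an ONB if and only if $\|\phi\|_{L^2} = 1/\sqrt{2\beta}$ and $\Re\langle X_{j,m}, Y_{j,m}\rangle = 0$ for every $(j,m)\in \Z\times\N$. So I would simply string these two biconditionals together: (a) of Theorem~\ref{wg} $\iff$ (b) of Theorem~\ref{wg} $=$ ``$\mathcal{W}$ is Parseval'' $\iff$ (by Corollary~\ref{cor-par-onb}) the two stated scalar conditions. The underlying reason Corollary~\ref{cor-par-onb} works is Proposition~\ref{KS}(\ref{hwt}): a Parseval frame is an ONB exactly when every element has unit $L^2$ norm; for $m=0$, $\|\psi_{j,0}\|^2 = 2\beta\|\phi\|^2$ forces $\|\phi\|_{L^2}=1/\sqrt{2\beta}$, and for $(j,m)\in\Z\times\N$, expanding $\|\psi_{j,m}\|^2 = \|X_{j,m}\|^2 + \|Y_{j,m}\|^2 + 2\Re\langle X_{j,m},Y_{j,m}\rangle$ together with $\|\phi_{j,\pm m}\|_{L^2} = \|\phi\|_{L^2}$ reduces unit-norm to $\Re\langle X_{j,m},Y_{j,m}\rangle = 0$.

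There is essentially no obstacle, since all the analytic content (the characterization of Parseval Wilson in terms of $\Phi_k,\Delta_k$ via Theorem~\ref{chwb}, and the Gabor-side bridge of Proposition~\ref{wcza}) has already been packaged into the two results being invoked. The only minor point worth flagging in the write-up is a harmless notational difference: the $X_{j,m}, Y_{j,m}$ appearing in the statement of Corollary~\ref{wonb} differ from the analogous vectors implicit in the proof of Corollary~\ref{cor-par-onb} by the positive scalar $\sqrt{\beta}$, but because $\Re\langle \sqrt{\beta}X,\sqrt{\beta}Y\rangle = \beta\,\Re\langle X,Y\rangle$, the vanishing condition $\Re\langle X_{j,m},Y_{j,m}\rangle=0$ is identical in either convention, so the conclusion passes through verbatim.
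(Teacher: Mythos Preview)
Your proposal is correct and takes essentially the same approach as the paper: the paper's proof is the single sentence ``The proof follows from Theorem~\ref{wg} and Corollary~\ref{cor-par-onb},'' which is exactly the chain you spell out. Your additional observation about the $\sqrt{\beta}$ scaling discrepancy between the $X_{j,m},Y_{j,m}$ defined here and those implicit in the proof of Corollary~\ref{cor-par-onb} is a genuine point the paper leaves tacit, and your resolution is correct.
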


\begin{proof}

The proof  follows from Theorem~\ref{wg} and Corollary~\ref{cor-par-onb}. 

\end{proof}

\begin{remark}\label{FR} Here are some observations from Theorem \ref{wg}.
\begin{enumerate}
\item \label{FR2} Suppose  that $\alpha=1$ and $\beta=\tfrac{1}{2n}$ where $n$ is any odd natural number. If we assume that $\hat{\phi}$ is a real-valued function,  then $\Delta_k(\xi)=0$ is automatically satisfied.  Indeed, in this case, by a change of variable ($m\mapsto 2k+1 -m$) over summation, we obtain $\Delta_k(\xi)= - \Delta_k(\xi),$ that  is, $\Delta_k(\xi)=0.$

\item Suppose that  $\alpha =1$ and $\beta^{-1} \in \mathbb N$  and $\hat{\phi}$ is real valued. Then  we shall construct a generator $\phi$ of Wilson system in Theorem \ref{wg}, using the Zak transform, see  Section \ref{ztrc}.

\end{enumerate}
\end{remark}

We conclude this section by   stating an analogue of Theorem~\ref{wg} in higher dimensions.  Since  the proofs are identical with the obvious modifications, we omit them.

 To state these results we need the following notations. Put $\mathbb N_0^d= \{0\}\cup \mathbb N^d, $ and  $1/\bar{2}=(1/2,\cdots, 1/2)\in \mathbb R^d.$  Let $a= (a_1,\cdots, a_d), b=(b_1, \cdots, b_d) \in \mathbb R^d.$  Let $A$ and $B$ be diagonal
matrices with diagonal entries  $\{ a_1, ...., a_d\}$ and $\{b_1,... , b_d\}$ respectively. Assume that $\det B \neq 0. $ Then  $B^{-1}= \text{diag} \{ b_1, ..., b_d\},$ and  put $b^*= |\det B|.$  For  $m=(m_1,..., m_d) \in \mathbb Z^d, Am= (m_1a_1,..., m_da_d)$   as usual. 
Let  $\phi:\mathbb R^d \to \mathbb C$ be a nice function.
We consider the multivariate Gabor system $$\mathcal{G}(\phi, A, B)=\{ \phi_{j,m}\}_{j, m \in \mathbb Z^d}= \{
\phi_{j,m}(x)= e^{2\pi i x \cdot Am}\phi(x-Bj)\}_{j, m \in \mathbb Z^d}$$ 
We define a  family generated by arbitrary  time-frequency shifts
\begin{align*}
\psi_{j,m}= \begin{cases} \phi (x-Bj) & \text{if} \  m=0, j \in \mathbb Z^d \\ 
 e^{-2\pi i Bj\cdot Am}\phi_{j,m}(x) + e^{2\pi i Bj\cdot Am} \phi_{j,-m}(x) & \text{if} \  j\in \mathbb {Z}^d,  0\neq m \in \mathbb N_0^d. 
\end{cases}
\end{align*}
The collections of these functions  is denoted by 
\[\mathcal{W}(\phi, A, B)= \{ \psi_{j,m}: j \in \mathbb Z^d, m \in \mathbb N_0^d\}.\]
We call  $ \mathcal{W}(\phi, A, B)$ the Wilson system. 
Specifically, we have  following result.

\begin{theorem}\label{chwb-d}The following statements are equivalent:
\begin{enumerate}
\item[(a)] The  Gabor system 
\[ \mathcal{G}(\phi, A, B)= \left \{e^{2\pi i x \cdot Am}\phi(x-Bj): m, j \in \mathbb Z^d\right \}\]
is a  tight frame for $L^2(\mathbb R^d)$ with frame bound  $(\det B)^{-1}$, and $\Delta_k^d=0$ a.e. for all $k\in \mathbb N_0^d,$ where $$\Delta_k^d(\xi):= \sum_{0\neq m \in \mathbb Z^d} (-1)^m\hat{\phi}(\xi+Am)\overline{\hat{\phi}(\xi+B^{-1}(k +1/\bar{2})-Am)}.$$
\item[(b)]  The Wilson system $\mathcal{W}(\phi, A, B)$ is a Parseval frame for  $L^2(\mathbb R^d)$.
\end{enumerate}


\end{theorem}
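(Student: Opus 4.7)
The plan is to prove Theorem~\ref{chwb-d} by reducing it to a $d$-dimensional version of Theorem~\ref{chwb} combined with the multivariate analogue of Proposition~\ref{wcza}. Specifically, I would first establish that $\mathcal{W}(\phi,A,B)$ is a Parseval frame for $L^2(\mathbb R^d)$ if and only if for every $k\in\mathbb Z^d$,
$$\Phi_k^d(\xi):=\sum_{m\in\mathbb Z^d}\hat\phi(\xi-Am)\overline{\hat\phi(\xi+B^{-1}k-Am)}=\delta_{k,0}\ \text{ a.e., and } \ \Delta_k^d(\xi)=0\ \text{ a.e.}$$
The theorem would then follow immediately: the first condition is, by the standard multivariate Walnut-style generalization of Proposition~\ref{wcza}, equivalent to $\mathcal{G}(\phi,A,B)$ being a tight Gabor frame with bound $(\det B)^{-1}$, while the second is exactly the hypothesis $\Delta_k^d=0$ in statement (a).

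To prove the $d$-dimensional Wilson characterization, I would mimic Proposition~\ref{dp}, working on the dense subset $\mathcal{D}^d=\{f\in L^2(\mathbb R^d):\hat f\in L^\infty(\mathbb R^d),\ \text{supp}\,\hat f\ \text{is a compact subset of}\ \mathbb R^d\setminus\{0\}\}$. By Plancherel, $\mathcal{I}(f)=\sum_{j,m}|\langle\hat f,\widehat{\psi_{j,m}}\rangle|^2$ expands as a sum of four cross products between $\widehat{\phi_{j,m}}$ and $\widehat{\phi_{j,-m}}$. Setting $F_{Am,B}(\xi)=\hat f(B^{-1}\xi)\overline{\hat\phi(B^{-1}\xi-Am)}$, the diagonal terms reduce, via Parseval on $L^2(\mathbb T^d)$ applied to the $\mathbb Z^d$-periodization of $F_{\pm Am,B}$, to integrals of translates of $|\hat f|^2$ by $B^{-1}k$ paired against $\Phi_k^d$. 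The off-diagonal cross products carry the phase $e^{\pm 2\pi i (Bj)\cdot(Am)}=\prod_i e^{\pm 2\pi i b_i j_i a_i m_i}$, which in the Poisson-summation argument of~\eqref{h2} produces the half-lattice shifts $B^{-1}(k+1/\bar 2)$ together with the signs $(-1)^m$, yielding integrals paired against $\Delta_k^d$. All interchanges of summation and integration would be justified by a multivariate analogue of Lemma~\ref{tl1}, whose proof reduces as in Lemma~\ref{tl2} to the estimate $\sum_{k,m\in\mathbb Z^d}|\hat f(\xi+B^{-1}k+Am)|\,|\hat f(\xi+Am)|\lesssim \|\hat f\|_{L^\infty}$, valid because compact support of $\hat f$ forces all but finitely many $B^{-1}k$ to contribute a zero summand.

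The converse $(b)\Rightarrow(a)$ of the Wilson characterization would follow the same route as the proof of Theorem~\ref{chwb}: testing the Parseval identity against $\hat f=\chi_{B_\epsilon(\xi_0)}$ for $\xi_0\notin B^{-1}\mathbb Z^d$ yields $\Phi_0^d=1$ a.e.; then, after polarization, testing against $\hat f_\delta=\mu(B_\delta(\xi_0))^{-1/2}\chi_{B_\delta(\xi_0)}$ and $\hat g_\delta=\mu(B_\delta(\xi_0))^{-1/2}\chi_{B_\delta(\xi_0+B^{-1}k_0)}$ isolates $\Phi_{k_0}^d(\xi_0)$ via Lebesgue differentiation, since every $k\neq k_0$ cross term vanishes for $\delta$ small by the strict positivity of $|B^{-1}(k-k_0)|$. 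The same argument with $\hat g_\delta$ shifted by $B^{-1}(k_0+1/\bar 2)$ extracts $\Delta_{k_0}^d(\xi_0)=0$. Local integrability of $\Phi_k^d$ and $\Delta_k^d$, needed for Lebesgue differentiation, follows from the multivariate version of Lemma~\ref{tl1} exactly as in Remark~\ref{r2}.

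The main obstacle will be the bookkeeping in the decomposition $\mathcal{I}(f)=\mathcal{I}_0^d(f)+\mathcal{I}_1^d(f)$: one must verify that the phase $e^{2\pi i(Bj)\cdot(Am)}$ still produces the clean half-lattice shift $B^{-1}(k+1/\bar 2)$ simultaneously in every coordinate. Because $A$ and $B$ are diagonal, this phase factorizes as $\prod_i e^{2\pi i b_i j_i a_i m_i}$, which permits invoking the one-dimensional Poisson identity coordinate-wise; still, one must track carefully how the Wilson indexing over $\mathbb N_0^d$ recovers the full sum over $\mathbb Z^d$ appearing in $\Delta_k^d$ via the $m\leftrightarrow -m$ pairing inherited from the one-dimensional construction.
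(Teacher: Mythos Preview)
Your proposal is correct and matches the paper's approach: the paper explicitly omits the proof of Theorem~\ref{chwb-d}, stating that ``the proofs are identical [to the one-dimensional case] with the obvious modifications,'' and your plan precisely carries out those modifications by rerunning Proposition~\ref{dp}, Lemma~\ref{tl1}, and the proof of Theorem~\ref{chwb} in $\mathbb R^d$ and then invoking the multivariate analogue of Proposition~\ref{wcza}. Your flagged concern about how the indexing over $\mathbb N_0^d$ together with the $m\leftrightarrow -m$ pairing recombines into the full $\mathbb Z^d$-sum in $\Delta_k^d$ is well placed and is exactly the sort of bookkeeping the paper sweeps under ``obvious modifications.''
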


\subsection{Examples  of  generator of Wilson systems} \label{cwlf} 

In this subsection we prove that  there exists rapidly decaying   $C^{\infty}$ function $\phi$  satisfying the hypothesis of Theorem~\ref{wg}. Thus we seek a function $\phi\in L^2(\R)$ which satisfies 
 $$\Phi_k(\xi)=\sum_{ m \in \mathbb Z} \hat{\phi}(\xi-\alpha m) \overline{\hat{\phi}(\xi+\beta^{-1}k-\alpha m)} =\delta_{k,0}  \ a.e  \   \text{for each} \ k \in \mathbb Z, $$  
 $$ \Delta_k(\xi)=\sum_{m \in \mathbb Z} (-1)^m\hat{\phi}(\xi+\alpha m)\overline{\hat{\phi}(\xi+\beta^{-1}(k +1/2)-\alpha m)}=0 \ a.e  \   \text{for each}  \ k \in \mathbb Z.$$

We give two classes of examples, one when $\alpha\beta=1/2$, which is the classical case developed in \cite{djl, auscher1994remarks}. The second family of examples concerns the case $\beta\in (0, 1/2)$ and $\alpha=1$. 
 
\begin{example}\label{example1} In this example we assume $\alpha\beta=1/2$ and recovers the classical case. Define $\hat{\phi}=\chi_{[0, \alpha]}$. 
We note that $\|\phi\|_{L^2}^2=\|\hat{\phi}\|_{L^2}^2=\frac{1}{2\beta},$ and $\hat{\phi}$ is supported in $[0, 1/2\beta].$  Since $\Phi_k(\xi)$ is periodic with period $\alpha,$ 
 we only needs to check what happens for $0\leq \xi \leq \alpha.$  Since the support of $\hat{\phi}$ is $[0, \alpha],$ we have  $\Phi_0 =1 $ a.e., $\Phi_k =0$ a.e. for $k\neq 0,$ and  $\Delta_k=0$ a.e. for all $k\in \mathbb Z.$ We also note that $\hat{\phi}(\xi) \overline{\hat{\phi}(\xi + 2 \alpha m)}=0$ for all $\xi \in \mathbb R$  and all $m\in \mathbb N.$  
On the other hand, by the Plancherel theorem  and \eqref{fabtf}, we have 
\begin{eqnarray}\label{rp}
\langle X_{j,m}, Y_{j,m}\rangle  & =  & \langle \widehat{X_{j,m}}, \widehat{Y_{j,m}} \rangle  
 =  (-1)^{j+m} \int_{\mathbb R} \hat{\phi}(\xi) \overline{\hat{\phi}(\xi + 2 \alpha m)} d\xi.
\end{eqnarray}
Hence by \eqref{rp},   it follows that  $\text{Re} \langle X_{j,m}, Y_{j,m} \rangle =0$ for all $(j,m) \in \mathbb Z \times \mathbb N.$   Thus, this example satisfies all the  hypotheses of Theorem~\ref{wg}.
\end{example}

\begin{example}\label{example2} 

Let $\beta\in (0, 1/2),$ and $\alpha=1.$
 For this case, we choose a function  $\hat{\phi}:\mathbb R \to \mathbb C$ supported in  $B_{\gamma}(0)=\{ \xi \in \mathbb R: |\xi|\leq \gamma \},$ where $\gamma = \frac{1}{4\beta}-\epsilon$ for $\epsilon>0$ suitable small enough so that $1<2\gamma,$ that is, $1< \frac{1}{2\beta}-2\epsilon.$  
 
 We note that for $\beta\in (0,1/2),$ we have $1<1/2\beta,$ and hence we may choose $\epsilon>0$ so that  $1< \frac{1}{2\beta}-2\epsilon.$ (For fix $\beta>0,$ take $2\epsilon= \frac{1}{2\beta}-1-\epsilon'$ for suitable small $\epsilon'>0$ and notice that for this choice of $\epsilon,$ we have $\gamma<1.$)

 For this $\hat{\phi},$ we note that
 $$\hat{\phi}(\xi) \overline{\hat{\phi}(\xi+\beta^{-1}k)}=0$$
 for all $0\neq k \in \mathbb Z,$ and
 $$\hat{\phi}(\xi) \overline{\hat{\phi}(\xi+\beta^{-1}(k+1/2))}=0$$
 for all $k \in \mathbb Z.$ In fact, if possible, assume that $k\neq 0$ and  $\hat{\phi}(\xi) \overline{\hat{\phi}(\xi+\beta^{-1}k)}\neq 0,$ then $|\xi| < \gamma$ and $|\xi + \beta^{-1}k| <\gamma.$ But this implies we have
 \[|\beta^{-1}k|=|-\beta^{-1}k|=|-\beta^{-1}k-\xi + \xi|< 2\gamma.\]
  Since $\gamma<\frac{1}{4\beta}$, we have  $|k|<2\beta\gamma < 1/2,$ therefore   we must have $k=0$, which is a contradiction.
 In fact, if possible,  assume that $\hat{\phi}(\xi) \overline{\hat{\phi}(\xi+\beta^{-1}(k+1/2))} \neq 0,$ then $|\xi|< \gamma$ and $ |\xi+\beta^{-1}(k+1/2))|< \gamma.$  But this implies 
 \[ |\beta^{-1}(k+1/2)|=|-\beta^{-1}(k+1/2)-\xi + \xi| \leq  2\gamma.\]
 Since $\gamma<\frac{1}{4\beta}$, we have  $|k+1/2|\leq 2\beta \gamma< 1/2$ but this is not possible as $k\in \mathbb Z.$
 Thus, we have $\Phi_k=0$ a.e. for all $0\neq  k\in \mathbb Z$ and $\Psi_k=0$ a.e. for all $k\in \mathbb Z.$
  
 Next, we wish to show that  $\Phi_0(\xi)= \sum_{m\in \mathbb Z} |\hat{\phi}(\xi-m)|^2=1$ a.e.. 
 Since this sum is periodic in $\xi$ with period 1, we only needs to check what happen for $0\leq \xi \leq 1.$
 To this end,  consider smooth function $G:\mathbb R \to [0,1]$ satisfying the following properties:
\[G(x)=\begin{cases} 0 &  \text{if} \ \  x \leq -\gamma+1,\\ 
1 & \text{if} \  \  x\geq \gamma.
\end{cases}
\]
We define the function $\hat{\phi}:\mathbb R \to \mathbb R$  by
 \[ \hat{\phi}(\xi)= \begin{cases} \sin \left[ \frac{\pi}{2} G(\xi+1) \right] &  \text{if} \ \xi\leq 0,\\
 \cos \left( \frac{\pi}{2} G(\xi) \right) & \text{if} \ \xi\geq 0.
 \end{cases}
 \]
 We note that $\hat{\phi}$ is supported in $[-\gamma, \gamma].$

Since $\hat{\phi}$ is supported in  $B_{\gamma}(0)\subset [-1, 1],$ it follows that 
$\hat{\phi}(\xi) \overline{\hat{\phi}(\xi + 2  m)} =0$ for all $m\in \mathbb N.$ In fact, if  $\hat{\phi}(\xi) \overline{\hat{\phi}(\xi + 2  m)} \neq 0,$ then  $|\xi|< \gamma $ and $ |\xi + 2m| < \gamma.$ But this implies we have $|2m|< |\xi + 2m| + |\xi| <2\gamma,$ and so $|m|\leq \gamma,$ which is contradiction  as $\gamma<1.$  Thus, for  real $\hat{\phi} $ with  support in $B_{\gamma}(0)\subset [-1,1],$ to show $\Phi_0=1,$ we only need to ascertain that $ \hat{\phi}^2(\xi)+\hat{\phi}^2(\xi-1)$ for all $0\leq \xi \leq 1.$ This is easy to verify. For  the above defined $\hat{\phi},$ we have 
\[\hat{\phi}^2(\xi)+\hat{\phi}^2(\xi-1)= \cos ^2 \left( \frac{\pi}{2}G(\xi) \right)+\sin ^2 \left( \frac{\pi}{2}G(\xi) \right) =1, \   \ (\xi \in [0,1]).\]

Since $\hat{\phi} \in C^{\infty}_c(\mathbb R),$ we have $\phi \in \mathcal{S}(\mathbb R)$. We note that this $\phi$ satisfies the hypothesis of  Theorems  \ref{chwb} and \ref{wg}. 

\end{example}

\begin{remark}\label{noonbs}
The Parseval Wilson frames of Example~\ref{example2} cannot lead to an ONB. Indeed, in order to have an ONB one must also choose $\phi$ so that 
$\|\phi\|_2=1/\sqrt{2\beta}$  and  $ \text{Re} \langle X_{j,m}, Y_{j,m}  \rangle =0 \ \text{for all} \ (j,m) \in \mathbb Z \times \mathbb N .$   However, given a function $\phi \in \mathcal{S}(\R)$ constructed in Example~\ref{example2}, we note that 
$\hat{\phi}$ is supported in $[-C, C]$ with $1/2<C< 1$ and 
\begin{eqnarray*}
\|\phi\|_{L^2}^2 & = & \|\hat{\phi}\|^2_{L^2} = \int_{-C}^{C} |\hat{\phi}(\xi)|^2 d\xi=  \int_{-1}^{1} |\hat{\phi} (\xi)|^2 d\xi =  \int_0^1 |\hat{\phi}(\xi)|^2+|\hat{\phi}(\xi-1)|^2 d\xi=1
\end{eqnarray*}
which happens only when $\beta=1/2$. 
\end{remark}

We can now prove Theorem~\ref{thm-main1}

\begin{proof}[\textbf{Proof of Theorem~\ref{thm-main1}}]
Choose  $\phi$ as in  Example \ref{example2}.
\end{proof}

\section{The Zak transform and Wilson systems }\label{ztrc}
In this section we construct example of generators $\phi$ that satisfy the hypothesis of Theorem~\ref{wg} and such that $\phi$ and $\hat{\phi}$ have exponential decay. To achieve this we extended a construction originally given in \cite{djl} to the case of Gabor frame of redundancy $N\in \N$ when $N\geq 3$. The key tool needed to deal with this case is the  Zak transform.  Using this we have the following results.

\begin{theorem}\label{hp}  Let $\hat{\phi}$ be real functions such that $|\hat{\phi}(\xi)|\lesssim  (1+ |\xi|)^{-1-\epsilon}$ and  $\beta=1/(2n)$ where $n$ is any odd natural number.  Then the following are equivalent:
\begin{enumerate}
\item  The  Gabor system  $\mathcal{G}(\phi, 1, \beta)$ is a  tight frame for $L^2(\mathbb R)$   with frame bound  $\beta^{-1}$. 
\item  The Wilson system $\mathcal{W}(\phi, 1, \beta)$ is a Parseval  frame for $L^2(\mathbb R)$. 
\item The Zak transform $Z_{\beta} \hat{\phi}$ of $\hat{\phi}$ satisfies
\begin{eqnarray*}
  \sum_{r=0}^{\beta^{-1}-1} \left| Z_{\beta} \hat{\phi}\left(x,   \xi- \beta r \right)\right|^2= \frac{1}{\beta}  
\end{eqnarray*}
for all most all $x, \xi \in [0,1].$
\end{enumerate}

Furthermore, if one of the above statements holds (hence all of them hold) , then the Parseval Wilson frame $\mathcal{W}(\phi, 1, \beta)$ is an orthonormal basis for $L^2(\mathbb R)$ if and only if   $\text{Re} \langle X_{j,m}, Y_{j,m} \rangle =0$ for all  $(j,m)\in \mathbb Z \times \mathbb N$ and $\|\phi\|_{L^2}= 1/\sqrt{2\beta}$, where $X_{j,m}$ and $Y_{j,m}$ were defined in Theorem~\ref{wg}.
\end{theorem}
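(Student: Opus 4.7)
The plan is to organize the three-way equivalence using results already established in the paper, with the Zak-transform step the only genuinely new calculation. I would first dispatch $(1)\Leftrightarrow(2)$ via Theorem~\ref{wg}, which already gives the equivalence of $(1)$ together with the condition ``$\Delta_k\equiv 0$ for all $k$'' with $(2)$. Thus the task reduces to verifying that, under the hypotheses of Theorem~\ref{hp}, the auxiliary condition $\Delta_k\equiv 0$ is automatic. This is precisely the observation recorded in Remark~\ref{FR}\,(1): when $\hat\phi$ is real-valued and $\beta=1/(2n)$ with $n$ odd, the substitution $m\mapsto 2nk+n-m$ in the defining series of $\Delta_k$, combined with $(-1)^n=-1$, produces $\Delta_k=-\Delta_k$ a.e., and hence $\Delta_k\equiv 0$.

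Next I would prove $(1)\Leftrightarrow(3)$ by reformulating Proposition~\ref{wcza} in Zak-transform language. By that proposition, $(1)$ is equivalent to
\[
\Phi_k(\xi)=\sum_{m\in\mathbb Z}\hat\phi(\xi-m)\,\overline{\hat\phi(\xi+\beta^{-1}k-m)}=\delta_{k,0},\qquad k\in\mathbb Z,
\]
a.e. I would then expand $|Z_\beta\hat\phi(x,\xi)|^2$ as a Fourier series in the second variable and average over the $\beta^{-1}=2n$ shifts $\xi\mapsto\xi-\beta r$; the orthogonality identity $\sum_{r=0}^{N-1}e^{-2\pi i jr/N}=N\,\mathbf{1}_{N\mid j}$ with $N=\beta^{-1}$ kills every Fourier frequency that is not a multiple of $\beta^{-1}$. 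A direct computation then shows that the surviving series coincides (up to the normalization factor $\beta^{-1}$) with the generating series $\sum_{k\in\mathbb Z}\Phi_k(\xi)\,e^{2\pi i k x}$ appearing in Theorem~\ref{chwb}. Matching Fourier coefficients, the Zak identity of $(3)$ becomes exactly $\Phi_k=\delta_{k,0}$. The polynomial decay $|\hat\phi(\xi)|\lesssim(1+|\xi|)^{-1-\epsilon}$ guarantees the absolute convergence of every double series involved and legitimises the termwise manipulations.

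For the final assertion, once any (hence all) of $(1)$--$(3)$ holds, Corollary~\ref{wonb} (equivalently Corollary~\ref{cor-par-onb}) yields that the Parseval Wilson frame is an ONB if and only if each of its vectors has unit $L^2$ norm. Computing $\|\psi_{j,0}\|_{L^2}=\sqrt{2\beta}\,\|\phi\|_{L^2}$ extracts $\|\phi\|_{L^2}=1/\sqrt{2\beta}$ from the $m=0$ vectors, while expanding
\[
\|\psi_{j,m}\|_{L^2}^2=\|X_{j,m}\|_{L^2}^2+\|Y_{j,m}\|_{L^2}^2+2\Re\langle X_{j,m},Y_{j,m}\rangle=2\beta\,\|\phi\|_{L^2}^2+2\Re\langle X_{j,m},Y_{j,m}\rangle
\]
and setting this equal to $1$ for $m\in\mathbb N$ reduces the unit-norm requirement to $\Re\langle X_{j,m},Y_{j,m}\rangle=0$.

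The main technical obstacle is the bookkeeping in step two: pinning down the normalization of $Z_\beta$, implementing the shift-averaging over $r=0,\dots,\beta^{-1}-1$, and identifying the surviving Fourier coefficients as the $\Phi_k$. Everything else reduces to a direct appeal to Theorem~\ref{wg}, Remark~\ref{FR}, Proposition~\ref{wcza} and Corollary~\ref{wonb}.
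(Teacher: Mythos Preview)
Your plan matches the paper's proof: Theorem~\ref{wg} together with Remark~\ref{FR}(\ref{FR2}) gives $(1)\Leftrightarrow(2)$, a Zak-transform identity (the paper's Proposition~\ref{ciz}) gives $(1)\Leftrightarrow(3)$, and Corollary~\ref{wonb} handles the ONB statement.

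One correction to your sketch of the Zak step, since you flag it as the main obstacle. The Fourier expansion you want is in the \emph{first} variable $x$ (where $Z_\beta$ is periodic), not the second: from $Z_\beta\hat\phi(x,\xi)=\beta^{-1/2}\sum_k\hat\phi(\beta^{-1}(\xi-k))e^{2\pi ikx}$ one gets $|Z_\beta\hat\phi(x,\xi)|^2=\beta^{-1}\sum_j e^{2\pi ijx}\sum_k\hat\phi(\beta^{-1}\xi-\beta^{-1}k)\,\hat\phi(\beta^{-1}\xi-\beta^{-1}k-\beta^{-1}j)$. Summing over the shifts $\xi\mapsto\xi-\beta r$, $r=0,\dots,\beta^{-1}-1$, does \emph{not} invoke the orthogonality identity $\sum_r e^{-2\pi ijr/N}=N\mathbf 1_{N\mid j}$; rather, it reparametrizes the inner sum via $m=r+\beta^{-1}k$, which runs over all of $\mathbb Z$, yielding
\[
\sum_{r=0}^{\beta^{-1}-1}|Z_\beta\hat\phi(x,\xi-\beta r)|^2=\beta^{-1}\sum_{j\in\mathbb Z}\Phi_{-j}(\beta^{-1}\xi)\,e^{2\pi ijx}.
\]
Matching $x$-Fourier coefficients then gives the equivalence with $\Phi_k=\delta_{k,0}$ directly. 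This is exactly the content of Proposition~\ref{ciz}, which the paper proves by starting from $\Phi_k$ and working toward the Zak side, but the computation is the same either way.
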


We shall prove the above theorems at the end of  the section. To this end,   we first  develop some tools using Zak transform.  In particular, this framework will allow us to convert  the infinitely many conditions  Theorem~\ref{chwb} (b) (one for every  $k$) into  a  single condition which can be tested (see Proposition \ref{ciz} below). Thus, we show how to use the Zak transform to construct smooth functions that satisfy the hypotheses of Theorem~\ref{chwb} and Theorem~\ref{wg} (see Theorem \ref{ndua} below).

Given $\beta>0$, we define the Zak transform of $f\in \mathcal{S}(\R)$ by 
\begin{equation}\label{ZT}
Z_{\beta}f(x,\xi)=  \frac{1}{\sqrt{\beta}} \sum_{k\in \mathbb Z}  f(\beta^{-1} (\xi-k)) e^{2\pi i k x }. 
\end{equation}
The  two-variable function $F=Z_{\beta} f$ is periodic in the first variable and ``semi-periodic" in the second variable:
\begin{eqnarray}\label{qp}
Z_{\beta}f(x+1, \xi) =Z_{\beta} f(x, \xi), \  \  Z_{\beta}f(x, \xi \pm1)= e^{\pm 2\pi i x} Z_{\beta} f(x, \xi).
\end{eqnarray}
 The set of all functions $F$ of two variables satisfying the periodicity conditions  \eqref{qp} can be equipped with the norm
\begin{eqnarray}\label{zn}
\|F\|^2= \int_0^1 \int_0^1 |F(x, \xi)|^2 dx d\xi.
\end{eqnarray} 
 We will denote the closure of this set, under the norm  \eqref{zn}, by $\mathcal{Z}.$ A function $F$ is in $\mathcal{Z}$ if and only if its restriction to $[0, 1)\times [0,1)$ is square integrable
 and it satisfies the periodicity  conditions almost everywhere. It  follows that $ \mathcal{Z}$ is isomorphic with $L^2([0,1)^2)$ and  the map   $Z_{\beta}$ defined by  \eqref{ZT}  can be extended to unitary map from $L^2(\R)$ to  $\mathcal{Z}:$
 \begin{eqnarray}\label{uzt}
 \int_{[0,1]^2} |Z_{\beta}f(x, \xi)|^2 dx d\xi = \|f\|_{L^2}^2.
 \end{eqnarray}
The functions  $E_{m,n} (x, \xi),$ defined by 
 \[  E_{m,n} (x, \xi) = e^{2\pi i nx} e^{2\pi i m\xi}  \   \ \text{for} \ x, \xi \in [0,1)\] 
constitute an orthonormal basis for $ \mathcal{Z}.$
 Let $\phi\in \mathcal{S}(\R).$ The inverse transform of \eqref{ZT} is given by 
\begin{eqnarray}\label{ifz}
\phi(\xi) = \sqrt{\beta}\int_0^1 Z_{\beta} \phi (x, \beta \xi) dx.
\end{eqnarray}

\begin{lemma} \label{rzf}Let $\phi \in \mathcal{S}(\R)$ and $\beta^{-1} \in \mathbb N.$ Then we have  $$Z_{\beta}\phi(x, \xi) =  \beta  e^{2\pi i \xi x} \sum_{j=0}^{\beta^{-2}-1} e^{2\pi i \xi j} Z_{\beta} \hat{\phi} \left(-\beta^{-2}\xi, \frac{x+j}{\beta^{-2}}\right)$$
and 
$$Z_{\beta} \hat{\phi}(x, \xi) =  \beta  e^{2\pi i \xi x} \sum_{j=0}^{\beta^{-2}-1} e^{2\pi i \xi j} Z_{\beta} \phi \left(\beta^{-2}\xi, -\frac{x+j}{\beta^{-2}}\right).$$
  \end{lemma}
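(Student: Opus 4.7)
The plan is to prove the first identity directly from the definitions of $Z_\beta$ and the Fourier transform via a Poisson summation/reindexing argument, and then deduce the second identity by applying the first to $\hat\phi$ in place of $\phi$.

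First I would write $N = \beta^{-1} \in \mathbb N$ so that $\beta^{-2} = N^2$. Starting from
\[
Z_\beta \phi(x,\xi) = \frac{1}{\sqrt{\beta}} \sum_{k\in\mathbb Z} \phi(\beta^{-1}(\xi-k))\, e^{2\pi i k x},
\]
I would use Fourier inversion $\phi(t) = \int \hat\phi(\omega)\, e^{2\pi i t\omega}\, d\omega$ inside each term, swap sum and integral (justified since $\phi\in\mathcal S(\R)$), and apply the Poisson summation identity $\sum_{k}e^{2\pi i k u} = \sum_\ell \delta(u-\ell)$ to the sum in $k$. The delta evaluates at $\omega = \beta(x-\ell)$ and produces a Jacobian factor $\beta$, yielding the intermediate representation
\[
Z_\beta \phi(x,\xi) \;=\; \sqrt{\beta}\, e^{2\pi i x \xi} \sum_{\ell\in\mathbb Z} \hat\phi(\beta(x-\ell))\, e^{-2\pi i \ell \xi}.
\]
This already expresses $Z_\beta\phi$ in terms of samples of $\hat\phi$, but not yet as samples of $Z_\beta\hat\phi$.

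Next I would reindex the sum over $\ell$ by writing uniquely $\ell = N^2 k - j$ with $k\in\mathbb Z$ and $j\in\{0,1,\ldots,N^2-1\}$ (a bijection obtained by dividing $-\ell$ with remainder by $N^2$). Under this substitution, $\hat\phi(\beta(x-\ell)) = \hat\phi(\beta(x+j-N^2 k)) = \hat\phi(N((x+j)/N^2 - k))$ and $e^{-2\pi i \ell \xi} = e^{2\pi i \xi j}\, e^{-2\pi i k N^2 \xi}$. Regrouping the inner sum over $k$ reconstructs
\[
Z_\beta \hat\phi\!\left(-N^2\xi,\; \tfrac{x+j}{N^2}\right) \;=\; \sqrt{N}\sum_{k\in\mathbb Z}\hat\phi\!\left(N\bigl(\tfrac{x+j}{N^2}-k\bigr)\right) e^{-2\pi i k N^2\xi},
\]
and tracking constants (using $\sqrt{\beta}\cdot 1/\sqrt{N} = \beta$) gives exactly the first claimed formula.

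For the second identity I would apply the first formula to $\hat\phi$ in place of $\phi$ and then use that $\widehat{\hat\phi}(t) = \phi(-t)$ together with the elementary reflection rule $Z_\beta(\phi(-\,\cdot))(u,v) = Z_\beta\phi(-u,-v)$ (a direct change of variables $k\mapsto -k$ in the defining sum). This turns $Z_\beta\widehat{\hat\phi}(-N^2\xi, (x+j)/N^2)$ into $Z_\beta\phi(N^2\xi, -(x+j)/N^2)$ and yields the second formula immediately.

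I expect the main obstacle to be the careful bookkeeping in the reindexing step: verifying that the map $(j,k)\mapsto N^2 k - j$ is a genuine bijection $\{0,\ldots,N^2-1\}\times\mathbb Z\to\mathbb Z$ and that every phase factor lines up correctly. The analytic ingredients (swapping sum and integral, Poisson summation) are routine for $\phi\in\mathcal S(\R)$, so the work is essentially combinatorial/algebraic once the intermediate representation is in hand.
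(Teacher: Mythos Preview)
Your proof is correct and follows essentially the same route as the paper: derive the intermediate representation
\[
Z_\beta\phi(x,\xi)=\sqrt{\beta}\,e^{2\pi i\xi x}\sum_{\ell\in\Z}\hat\phi(\beta(x-\ell))\,e^{-2\pi i\ell\xi}
\]
via Poisson summation, then reindex $\ell=\beta^{-2}k-j$ to recognize $Z_\beta\hat\phi$. The only cosmetic difference is that the paper obtains the second identity by repeating the direct calculation, whereas you deduce it from the first via $\widehat{\hat\phi}=\phi(-\,\cdot)$ and the reflection rule $Z_\beta(\phi(-\,\cdot))(u,v)=Z_\beta\phi(-u,-v)$; both are valid and equally short.
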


  \begin{proof}
  Denote $ T_xf(t)=f(t-x),  M_{\xi}f(t)= e^{2\pi i \xi t}f(t).$  For fixed $\xi$ and $\beta,$ put $h(t)=\phi (\beta^{-1} (\xi-t)) \ (t\in \R).$  Then  $\hat{h}(y)=\beta e^{-2\pi i \xi y} \hat{\phi}(-\beta y) \ (y\in \R).$ Using the Poisson summation formula (see e.g., \cite[p.16 (1.35)]{gro}), we find 
\begin{eqnarray*}
Z_{\beta}\phi (x,\xi) & = &  \frac{1}{\sqrt{\beta}} \sum_{k\in \mathbb Z}  \phi (\beta^{-1} (\xi-k)) e^{2\pi i k x }= \frac{1}{\sqrt{\beta}} \sum_{k\in \mathbb Z} h(k)e^{2\pi i k x } \ \  \  \\
& = &  \frac{1}{\sqrt{\beta}} \sum_{k\in \mathbb Z} \widehat{(M_xh)}(k) = \frac{1}{\sqrt{\beta}} \sum_{k\in \mathbb Z} T_x\widehat{h}(k) \\
& = &  \sqrt{\beta} e^{2\pi i \xi x}\sum_{k\in \mathbb Z} \hat{\phi}(-\beta (k-x)) e^{-2\pi i \xi  k} \\
& = &  \sqrt{\beta} e^{2\pi i \xi x}\sum_{k\in \mathbb Z} \hat{\phi}\left(\beta^{-1} \left(  \frac{x-k}{\beta^{-2}}\right) \right) e^{-2\pi i \xi  k}. \\
\end{eqnarray*} 
Noticing  $\mathbb Z = \{ \beta^{-2}k-j: k\in \Z, j=0,1,..., (\beta^{-2}-1)\},$ we may rewrite 
\begin{eqnarray*}
\sum_{k\in \mathbb Z} \hat{\phi}\left(\beta^{-1} \left(  \frac{x-k}{\beta^{-2}}\right) \right) e^{-2\pi i \xi  k} & = & \sum_{j=0}^{\beta^{-2}-1} \sum_{k\in \mathbb Z} \hat{\phi}\left(\beta^{-1} \left(  \frac{x+j}{\beta^{-2}} - k\right) \right) e^{-2\pi i \xi  (\beta^{-2}k -j)}\\
& = & \sqrt{\beta} \sum_{j=0}^{\beta^{-2}-1} e^{2\pi i \xi j} Z_{\beta} \hat{\phi}\left(-\beta^{-2}\xi, \frac{x+j}{\beta^{-2}}\right).
\end{eqnarray*}
This completes the proof of first identity.  Since the second identity can be obtained similarly, we shall omit the details. 
\end{proof}

\begin{proposition} \label{ciz} Let $\phi$ be a real-valued function such that  $\hat{\phi}$ and   $\phi$  have  exponential decay. Suppose that $\alpha =1$ and $\beta^{-1}\in \mathbb N.$
Then 
\[\sum_{ m \in \mathbb Z} \hat{\phi}(\xi- m) \hat{\phi}(\xi+\beta^{-1}k-m) =\delta_{k,0} \  \  \text{a.e. for each}   \ k\in \mathbb Z\]
if and only if the Zak transform   $Z_{\beta} \hat{\phi}$   of $\hat{\phi}$  satisfies  
\begin{eqnarray}\label{dfc}
  \sum_{r=0}^{\beta^{-1}-1} \left| Z_{\beta} \hat{\phi}\left(x,   \xi- \beta r \right)\right|^2= \frac{1}{\beta}  
\end{eqnarray}
for all most all $x, \xi \in [0,1].$
\end{proposition}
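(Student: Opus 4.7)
The plan is to expand the left-hand side of \eqref{dfc} into a Fourier series in the variable $x$ and recognize its coefficients as the functions $\Phi_k$; the proposition then follows from uniqueness of Fourier coefficients. Throughout, write $N:=\beta^{-1}\in\mathbb N$; the exponential decay hypotheses on $\phi$ and $\hat\phi$ ensure that every double sum encountered below converges absolutely, legitimizing all rearrangements.

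Using the definition \eqref{ZT} of the Zak transform and the fact that $\hat\phi$ is real, I would first compute
\[\bigl|Z_\beta\hat\phi(x,\eta')\bigr|^2 = \frac{1}{\beta}\sum_{k,l \in \mathbb Z} \hat\phi\bigl(N(\eta' - k)\bigr)\,\hat\phi\bigl(N(\eta' - l)\bigr)\,e^{2\pi i(k-l)x}.\]
Substituting $\eta' = \xi - \beta r$, so that $N(\eta'-k)=N\xi - r - Nk$, then summing over $r = 0, 1, \dots, N-1$ and collecting terms by $m := k - l$ yields
\[\sum_{r=0}^{N-1}\bigl|Z_\beta\hat\phi(x, \xi - \beta r)\bigr|^2 = \frac{1}{\beta}\sum_{m \in \mathbb Z} e^{2\pi i m x}\sum_{l \in \mathbb Z}\sum_{r=0}^{N-1}\hat\phi(N\xi - r - Nl - Nm)\,\hat\phi(N\xi - r - Nl).\]

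The key combinatorial step is the bijection $(r,l) \mapsto p := r + Nl$ from $\{0,1,\dots,N-1\}\times\mathbb Z$ onto $\mathbb Z$. After this re-indexing the inner double sum collapses to $\sum_{p\in\mathbb Z} \hat\phi(N\xi - p - Nm)\,\hat\phi(N\xi - p)$, which, matched against the definition of $\Phi_k$ in Theorem~\ref{chwb} with $\alpha=1$ and $\hat\phi$ real, equals $\Phi_{-m}(N\xi)$. Hence
\[\sum_{r=0}^{N-1}\bigl|Z_\beta\hat\phi(x, \xi - \beta r)\bigr|^2 = \frac{1}{\beta}\sum_{m \in \mathbb Z} e^{2\pi i m x}\,\Phi_{-m}(N\xi).\]

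For a.e.\ $\xi$, the right-hand side is a Fourier series in $x\in[0,1]$ whose $m$th coefficient is $\Phi_{-m}(N\xi)/\beta$. By the uniqueness of Fourier coefficients, this series equals the constant $1/\beta$ for a.e.\ $(x,\xi)\in[0,1]^2$ if and only if $\Phi_{-m}(N\xi) = \delta_{m,0}$ for a.e.\ $\xi$ and every $m\in\mathbb Z$. Since $\Phi_k$ is $1$-periodic in its argument (a one-line check from its definition) and $\xi\mapsto N\xi$ covers a full period as $\xi$ ranges over $[0,1]$, this is in turn equivalent to $\Phi_k\equiv\delta_{k,0}$ a.e. on $\mathbb R$, which is the stated condition. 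The only genuine obstacle is bookkeeping: verifying absolute convergence to justify the rearrangements, and executing the re-indexing $(r,l)\mapsto r+Nl$ correctly; both are handled routinely by the exponential decay hypothesis.
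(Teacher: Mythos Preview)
Your argument is correct and is essentially the same as the paper's: both proofs identify $\beta\sum_{r=0}^{N-1}|Z_\beta\hat\phi(x,\xi-\beta r)|^2$ as a Fourier series in $x$ whose $k$th coefficient is $\Phi_k$ (evaluated at $N\xi$), and then invoke uniqueness of Fourier coefficients. The only difference is the direction of the computation---the paper starts from $\Phi_k$, substitutes the inverse Zak formula \eqref{ifz} for each factor $\hat\phi$, and manipulates the resulting double integral into the Fourier-coefficient form, whereas you expand $|Z_\beta\hat\phi|^2$ directly from the defining series \eqref{ZT} and collapse via the bijection $(r,l)\mapsto r+Nl$; your route is a little more streamlined but the content is the same.
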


\begin{proof} With the assumptions on $\phi$ and $\hat{\phi}$ all the calculations that follow are justified.
Noticing 
$\mathbb Z = \{ \beta^{-1}m+ r: m \in \mathbb Z, r=0,1,..., (\beta^{-1}-1) \}$ and 
using  \eqref{ifz} and \eqref{qp}, we have 
\begin{eqnarray*}
K& := &  \sum_{ m \in \mathbb Z} \hat{\phi}(\xi- m) \hat{\phi}(\xi+\beta^{-1}k- m)\\
& = & \beta  \sum_{m\in \mathbb Z} \int_0^1 \int_0^1  Z_{\beta} \hat{\phi}(x, \beta(\xi-m))  Z_{\beta} \hat{\phi}(x', \beta(\xi-m)+k) dx dx'  \\
& = &  \beta  \sum_{m\in \mathbb Z} \int_0^1 \int_0^1  Z_{\beta} \hat{\phi}\left(x,   \frac{\xi-m}{\beta^{-1}} \right))  Z_{\beta} \hat{\phi}\left(x',  \frac{\xi-m}{\beta^{-1}} +k\right) dx dx'\\
& = &  \beta  \sum_{r=0}^{\beta^{-1}-1} \sum_{m\in \mathbb Z} \int_0^1 \int_0^1  Z_{\beta} \hat{\phi}\left(x,   \frac{\xi-r}{\beta^{-1}}-m \right)  Z_{\beta} \hat{\phi}\left(x',  \frac{\xi-r}{\beta^{-1}} -m+k\right) dx dx'\\
& = &  \beta  \sum_{r=0}^{\beta^{-1}-1} \sum_{m\in \mathbb Z} \int_0^1 \int_0^1  e^{-2\pi i m (x+x')} e^{2\pi i x'k} Z_{\beta} \hat{\phi}\left(x,   \frac{\xi-r}{\beta^{-1}} \right)  Z_{\beta} \hat{\phi}\left(x',  \frac{\xi-r}{\beta^{-1}} \right) dx dx'\\
& = &  \beta  \sum_{r=0}^{\beta^{-1}-1} \sum_{m\in \mathbb Z} \int_0^1  e^{-2\pi i m x} Z_{\beta} \hat{\phi}\left(x,   \frac{\xi-r}{\beta^{-1}} \right) \left( \int_0^1 Z_{\beta} \hat{\phi}\left(x',  \frac{\xi-r}{\beta^{-1}} \right)   e^{2\pi i x'(k-m)}  dx' \right) dx\\
& = &  \beta  \sum_{r=0}^{\beta^{-1}-1}  \int_0^1   Z_{\beta} \hat{\phi}\left(x,   \frac{\xi-r}{\beta^{-1}} \right) \left( \sum_{m\in \mathbb Z} c_{k-m} e^{-2\pi i m x} \right) dx\\
& = &  \beta  \sum_{r=0}^{\beta^{-1}-1}  \int_0^1   Z_{\beta} \hat{\phi}\left(x,   \frac{\xi-r}{\beta^{-1}} \right) \left( \sum_{m\in \mathbb Z} c_{m} e^{2\pi i m x} \right)  e^{-2\pi i kx}dx\\
& = & \beta  \int_0^1 \sum_{r=0}^{\beta^{-1}-1} \left| Z_{\beta} \hat{\phi}\left(x,   \frac{\xi-r}{\beta^{-1}} \right)\right|^2 e^{-2\pi i xk} dx,
\end{eqnarray*}
where  $ c_{k-m}= \int_0^1 Z_{\beta} \hat{\phi}\left(x',  \frac{\xi-r}{\beta^{-1}} \right)   e^{-2\pi i x'(m-k)}  dx'$ is the Fourier coefficient of function $Z_{\beta} \hat{\phi}\left(\cdot,  \frac{\xi-r}{\beta^{-1}} \right)$ at  the point $m-k.$ Hence, the proof follows.
\end{proof}

We  can  construct explicit  ``nice"  $\phi$ that satisfying  hypothesis of Theorem \ref{wg} by constructing $\phi$ satisfying  \eqref{dfc}. The method we used is an extension of the construction given in \cite[Section 4]{djl} for the case $\alpha=1$, $\beta=1/2$.

We start with a real-valued  function $g$ with exponential decay,
\begin{equation}\label{ed}
\begin{cases}
 |g(x)|\leq C e^{-\lambda |x|},   \    \ x\in \mathbb R, \lambda>0,\\
 |\hat{g}(\xi)|\leq C e^{-\mu |\xi|},   \   \    \   \xi \in \R, \mu>0.
 \end{cases}
\end{equation}

The function $g$ will be used as  seed to construct a function in $\mathcal{Z}$ (see \eqref{tpsi} below) that satisfies the condition of Proposition \ref{ciz} \eqref{dfc}.

Observe that  $G:= Z_{\beta}g$ is a well-defined continuous and bounded function. Furthermore, since $g$ is real-valued we have, for $x, \xi \in \R,$
\begin{eqnarray}\label{dg}
G(-x, \xi) = \overline{ G(x, \xi)}.
\end{eqnarray}
Assume further that
\begin{eqnarray}\label{ifc}
\inf_{x, \xi \in [0,1]} \sum_{r=0}^{\beta^{-1}-1}  \left| G \left(x,   \xi- \beta r) \right)\right|^2 >0.
\end{eqnarray}

We then define
\begin{eqnarray}\label{mi}
\hat{\phi}= Z_{\beta}^{-1} \Psi,
\end{eqnarray}
where
\begin{eqnarray}\label{tpsi}
\Psi (x,\xi)= \frac{1}{\sqrt{\beta}}  \frac{G(x, \xi)}{ \left(\sum_{r=0}^{\beta^{-1}-1}  \left| G \left(x, \xi - \beta r \right)\right|^2 \right)^{1/2}}, 
\end{eqnarray}
and
\[ Z_{\beta}^{-1}\Psi (\xi)=  \sqrt{\beta} \int_0^1 \Psi (x, \beta \xi) dx.\]

\begin{theorem} \label{ndua} The function $\hat{\phi},$ defined by \eqref{mi}, is real-valued and satisfies \eqref{dfc}. Furthermore, $\phi$ and $\hat{\phi}$
 have exponential decay.  
 \end{theorem}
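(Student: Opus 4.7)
The plan is to deduce each of the three conclusions directly from the structural properties of $\Psi$. The real-valuedness of $\hat\phi$ follows from the conjugation symmetry \eqref{dg}: since $g$ is real, $G(-x,\xi)=\overline{G(x,\xi)}$, so the denominator $D(x,\xi)^2:=\sum_{r=0}^{\beta^{-1}-1}|G(x,\xi-\beta r)|^2$ is even in $x$, and therefore $\Psi(-x,\xi)=\overline{\Psi(x,\xi)}$. Plugging this into $\hat\phi(\xi)=\sqrt{\beta}\int_0^1\Psi(x,\beta\xi)\,dx$ and performing the change of variable $x\mapsto -x$ (using the $1$-periodicity of $\Psi$ in $x$) yields $\overline{\hat\phi(\xi)}=\hat\phi(\xi)$.

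For condition \eqref{dfc}, the key point is that $D(x,\xi)^2$ is $\beta$-periodic in $\xi$. Since \eqref{qp} implies $|G(x,\xi-1)|=|G(x,\xi)|$, the shift $\xi\mapsto\xi-\beta$ merely cyclically permutes the $\beta^{-1}$ terms in the sum defining $D^2$. Because $Z_\beta\hat\phi=\Psi$, this immediately gives
\begin{equation*}
\sum_{r=0}^{\beta^{-1}-1}\bigl|\Psi(x,\xi-\beta r)\bigr|^2=\frac{1}{\beta}\cdot\frac{\sum_{r=0}^{\beta^{-1}-1}|G(x,\xi-\beta r)|^2}{D(x,\xi)^2}=\frac{1}{\beta},
\end{equation*}
which is \eqref{dfc}.

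The bulk of the work lies in the exponential decay. My plan is to show that $\Psi$ extends to a function of $(x,\xi)$ that is analytic and uniformly bounded in a bi-strip about the real plane, and to exploit the identification (immediate from \eqref{ZT}) that the $k$-th Fourier coefficient of $\Psi(\cdot,\eta)$ in $x$ equals $\beta^{-1/2}\hat\phi(\beta^{-1}(\eta-k))$. Concretely, the exponential decay $|g(t)|\lesssim e^{-\lambda|t|}$ makes the series defining $G(x,\xi)$ absolutely convergent in a strip $|\Im x|<\lambda/(2\pi\beta)$, and the exponential decay $|\hat g(\omega)|\lesssim e^{-\mu|\omega|}$ analytically extends $g$, and hence $G$ in $\xi$, to a strip $|\Im \xi|<\beta\mu/(2\pi)$. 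To handle the non-analytic absolute values in the denominator, I would use the identity $|G(x,\xi)|^2=G(x,\xi)G(-x,\xi)$, valid on the real axis by the conjugation symmetry, as the analytic continuation of $D^2$ to the bi-strip; the lower bound \eqref{ifc} together with continuity of this extension then keeps $D^2$ bounded away from zero on a (possibly smaller) bi-strip, so that $\Psi=G/(\sqrt{\beta}\,D)$ is analytic and bounded there. Exponential decay of the Fourier coefficients of $\Psi(\cdot,\eta)$ in $x$ then translates, via the identification above, into $|\hat\phi(\xi)|\lesssim e^{-c|\xi|}$; the analogous decay of $\phi$ follows from Lemma~\ref{rzf}, which expresses $Z_\beta\phi$ in terms of $\Psi$ and transfers $x$-analyticity to $\tilde\Psi:=Z_\beta\phi$ from $\xi$-analyticity of $\Psi$. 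The principal technical obstacle is this denominator step: one must quantify how far the pointwise estimate \eqref{ifc} propagates off the real axis so as to produce an explicit bi-strip on which $\Psi$ is analytic and bounded.
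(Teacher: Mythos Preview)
Your proposal is correct and follows essentially the same route as the paper's proof: real-valuedness via the symmetry $\Psi(-x,\xi)=\overline{\Psi(x,\xi)}$, condition \eqref{dfc} from the $\beta$-periodicity of $D^2$ in $\xi$, and exponential decay by analytically continuing $G$ (and the denominator via $G(x,\xi)G(-x,\xi)$) into a strip, invoking \eqref{ifc} to keep the denominator bounded below, and then reading off decay from analyticity---with Lemma~\ref{rzf} used to transfer the argument to $\phi$. Your phrasing of the decay step in terms of Fourier coefficients of $\Psi(\cdot,\eta)$ is equivalent to the paper's contour-shift in the inverse Zak formula combined with the quasi-periodicity $\Psi(z,\xi+1)=e^{2\pi i z}\Psi(z,\xi)$; these are two expressions of the same standard fact.
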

  \begin{proof}[Sketch of the Proof]  The detail proof for the case $\alpha=1, \beta=1/2$ can be found in \cite[Theorem 4.1]{djl}.  Since the main ideas for the generalization   is similar, we shall highlight  only the crucial points and omit the details. Now, for the clarity of presentation, we divide the sketch  proof into four steps.

 \noindent \textbf{Step I}: It follows from  \eqref{dg} and \eqref{tpsi} that 
 $\Psi(-x, \xi)= \overline{\Psi (x, \xi)}$
 and so, using \eqref{qp} and \eqref{ifz}, we have $\overline{\hat{\phi}(\xi)}  =  \hat{\phi}(\xi).$

\noindent \textbf{Step II}:  The function $\hat{\phi}$
 has an  exponential decay.   To achieve this, we may follow the procedure:
 \begin{enumerate}
 \item[1.] Because of the decay condition \eqref{ed}, the series 
 \begin{eqnarray*}
G(z, \xi):= G(x+i\tau, \xi)= \frac{1}{\sqrt{\beta}} \sum_{\ell \in \mathbb Z} e^{2\pi i (x+ i \tau ) \ell} g\left(\beta^{-1} (\xi-\ell) \right)
 \end{eqnarray*}
 converges absolutely for $\tau> - \lambda/\pi.$  The extension $G(z, \xi),$ for fixed $\xi \in \mathbb R,$ is complex analytic on $\R + i (-\lambda/\pi, \infty)$   and  satisfies
 \begin{eqnarray*}
 \begin{cases} 
 G(z, \xi +1)= e^{2\pi i z} G(z, \xi)\\
 G(z+1, \xi)= G(z, \xi).
 \end{cases}
 \end{eqnarray*}
 \item[2.] We show that  $\Psi$  (see \eqref{tpsi})  also has analytic extension (the main obstacle  is its denominator). To this end,  we define, for $z\in \R + i (-\lambda/\pi, \infty), \xi\in \R$
 \begin{eqnarray*}
 \mathcal{G}(z, \xi)=  \sum_{r=0}^{\beta^{-1}-1}G(z, \xi -\beta r) G(-z, \xi -\beta r). 
 \end{eqnarray*}
 Then $\mathcal{G}(\cdot, \xi)$ is analytic on $\R + i (-\lambda/ \pi, \infty)$ for every $\xi \in \R,$ and 
 \begin{eqnarray*}\label{ag}
 \mathcal{G}(z+1, \xi) = \mathcal{G}(z, \xi)= \mathcal{G}(z, \xi +1) 
 \end{eqnarray*}
 for all   $z\in \R + i (-\lambda/ \pi, \infty).$ Using \eqref{ed} and  \eqref{ag}, $\mathcal{G}$  is uniformly continuous on  $\R + i [-\lambda/ \pi, \infty)\times \R.$  Because of condition \eqref{ifc},  there exists $ \tilde{\lambda}>0$ so that $|\mathcal{G}|$ is bounded below away from zero on $(\R + i [-\tilde{\lambda}, \tilde{\lambda}]) \times \R.$
 We can therefore define $\mathcal{G}^{-1/2}$ as a uniformly continuous function  on $(\R + i [- \tilde{\lambda}, \tilde{\lambda}]) \times \R;$ $\mathcal{G}(z, \xi)^{-1/2}$ is analytic in $z\in \R + i (- \tilde{\lambda}, \tilde{\lambda}), \xi \in \R.$  We can therefore extend \eqref{tpsi} and  define 
 \[ \Psi (z, \xi) = \frac{1}{\sqrt{\beta}}\mathcal{G}^{-1/2}(z, \xi) G(z, \xi) \  \  \  (z\in \R + i (-\tilde{\lambda}, \tilde{\lambda}),  \xi \in \R). \]
 \item[3.]  We use the above  extension and its property 
 \begin{eqnarray*}\label{fpo}
\begin{cases}
\Psi (z+1, \xi)=\Psi (z, \xi),\\
\Psi(z, \xi +1)= e^{2\pi i z} \Psi (z, \xi)
\end{cases}
\end{eqnarray*}
to prove exponential decay of $\hat{\phi}.$ To this end, by \eqref{ifz} and \eqref{mi} and Cauchy formula, we have
\begin{eqnarray*}
|\hat{\phi}(\xi)|  & = &  \left|\sqrt{\beta} \int_0^1\Psi (x, \beta \xi) dx\right|\\
& = & \sqrt{\beta} \left| \int_0^{\Lambda} \Psi (i\tau, \beta \xi) d\xi +\int_0^{1} \Psi (x+i\Lambda, \beta \xi) d\xi  + \int_{\Lambda}^{0} \Psi (1+i\tau, \beta \xi) d\xi \right|\\
& = & \sqrt{\beta} \left|\int_0^{1} \Psi (x+i\Lambda, \beta \xi) d\xi \right| \lesssim  e^{-\pi \Lambda \xi},  
\end{eqnarray*}
for  $\xi \geq 0$ and  some $0< \Lambda< \tilde{\lambda}.$    For $\xi \leq 0$ we  may use the similar argument, but we deform the integration path by going into the $\text{Im} z<0$ the half plane. 
 \end{enumerate}
\noindent \textbf{Step III}:  The $\phi$  has  an exponential decay. To achieve  this, we use the connection (Lemma  \ref{rzf}) between the Zak transforms of a function and of its Fourier transform and  similar procedure as in  the previous step. For the clarity, we briefly  highlight substeps:
\begin{enumerate}
\item[1.] $G$ can be extended to a uniformly continuous function  on $\R \times (\R+ i (\mu/4\pi, \infty))$, and that, for every $x \in \R,$  $G(x, \xi + i \sigma)$  is analytic in  $\xi +i\sigma \in \R +i (\mu/4\pi, \infty).$ 
\item [2.] We  define, for $x \in \R,$  $w= \xi + i \sigma \in \R + i (\mu/4\pi, \infty),$
\[ \Gamma(x, w) = \sum_{r=0}^{\beta^{-1}-1} G(x, w- \beta r) G(-x, w-\beta r). \]
Again $\Gamma(x, w)$ is analytic, and there exists  $\tilde{\mu}>0$  so that $|\Gamma|$ is bounded below away from zero on $\R \times (\R + i [-\tilde{\mu}, \tilde{\mu}]).$ It follows that $\Psi$ has an extension  to $\R \times (\R + i [-\tilde{\mu}, \tilde{\mu}]),$
\[ \Psi(x, \xi+i \sigma)=\frac{1}{\sqrt{\beta}} G(x, \xi +i \sigma)  \Gamma(x, \xi+ i\sigma)^{-1/2},\]
which is analytic in $\xi +i \sigma$ for every fixed $x,$ and which satisfies
\begin{eqnarray*}
\begin{cases} \Psi (x, w+1)= e^{2\pi i x} \Psi (x, w),\\
\Psi (x+1, w)=  \Psi (x, w).
\end{cases}
\end{eqnarray*}
\item[3.]   
By Lemma \ref{ifz} and \eqref{rzf}, we have 
\begin{eqnarray*}
 \phi(\xi) & = &  \sqrt{\beta}\int_0^1 Z_{\beta} \phi (y, \beta \xi) dy\\
 & = & \beta \sqrt{ \beta}   \sum_{j=0}^{\beta^{-2}-1} \int_0^1 e^{2 \beta \pi i \xi (j+y)} Z_{\beta} \hat{\phi} \left(-\beta^{-1}\xi, \frac{y+j}{\beta^{-2}}\right) dy\\
 & = & \beta \sqrt{ \beta}   \sum_{j=0}^{\beta^{-2}-1} \int_0^1 e^{2\beta \pi i \xi (j+y)} \Psi \left(-\beta^{-1}\xi, \frac{y+j}{\beta^{-2}}\right) dy.
\end{eqnarray*}
Now similarly to the the last part of Step II, we may obtain the the desired estimate.
\end{enumerate}
\noindent \textbf{Step IV}:  In view of \eqref{tpsi}, notice that 
\begin{eqnarray}\label{ku}
  \sum_{r=0}^{\beta^{-1}-1} \left| Z_{\beta} \hat{\phi}\left(x,   \xi- \beta r \right)\right|^2 & = & \sum_{r=0}^{\beta^{-1}-1} \left| \Psi \left(x,   \xi- \beta r \right)\right|^2  \nonumber \\
  & = &  \frac{1}{\beta} \sum_{r=0}^{\beta^{-1}-1}  \frac{\left|G(x, \xi-\beta r)\right|^2}{\sum_{\ell=0}^{\beta^{-1}-1}  \left| G \left(x, \xi -\beta \ell- \beta r \right)\right|^2 } \nonumber \\
  &=&  \frac{1}{\beta} \sum_{r=0}^{\beta^{-1}-1}  \frac{\left|G(x, \xi-\beta r)\right|^2}{\sum_{\ell=r}^{\beta^{-1}-1+r}  \left| G \left(x, \xi -\beta \ell \right)\right|^2 }.
\end{eqnarray}
By Lemma  \ref{qp}, for $r\geq 1,$ we notice
\begin{eqnarray*}
\sum_{\ell=r}^{\beta^{-1}-1+r}  \left| G \left(x, \xi -\beta \ell \right)\right|^2 & = &  \sum_{\ell=r}^{\beta^{-1}-1}  \left| G \left(x, \xi -\beta \ell \right)\right|^2 \\
&& + \left| G \left(x, \xi -\beta (\beta^{-1}-1 +1) \right)\right|^2 + \cdots  \left| G \left(x, \xi -\beta (\beta^{-1}-1 +r) \right)\right|^2\\
& = &   \sum_{\ell=r}^{\beta^{-1}-1}  \left| G \left(x, \xi -\beta \ell \right)\right|^2 \\
&& + \left| G \left(x, \xi -1 \right)\right|^2 + \cdots  \left| G \left(x, \xi - \beta (r-1) -1 \right)\right|^2\\
& = & \sum_{\ell=0}^{\beta^{-1}-1}  \left| G \left(x, \xi -\beta \ell \right)\right|^2.
\end{eqnarray*}
This together with \eqref{ku}, we have 
\[\sum_{r=0}^{\beta^{-1}-1} \left| Z_{\beta} \hat{\phi}\left(x,   \xi- \beta r \right)\right|^2 = \frac{1}{\beta}.\]
This together with preceding steps completes the proof.
 \end{proof}
 
 We are now ready to prove  Theorems \ref{hp}.
 
\begin{proof}[Proof of Theorem \ref{hp}]
Combining Theorems  \ref{wg} and \ref{ndua}, Remark \ref{FR}\eqref{FR2} and Proposition \ref{ciz},  the proof follows.

For the last part, we proceed as in the proof of the last part of Theorem~\ref{wg}. 
\end{proof}

\begin{proof}[Proof of Theorem~\ref{thm-main2}] 
Since Zak transform $Z_{\beta}:L^2(\mathbb R)\to \mathcal{Z}$ is surjective, to prove Theorem \ref{thm-main2}, it suffices to prove that there does not exist  any  generator $\phi,$ defined by \eqref{mi} (with any seed function $g$), which can convert the Wilson system \eqref{dks} into an ONB for  $L^2(\R)$ unless $\beta^{-1}=2.$   

We  shall prove this by contradiction.  If possible, suppose that there  exist   generator $\phi,$ defined by \eqref{mi} (for some seed function $g$), which can convert the Wilson system \eqref{dks} into an ONB for  $L^2(\R)$ and $\beta^{-1}\neq 2.$ Then by the last part of Theorem \ref{wg} (or Theorem~\ref{hp}), we must have  $\|\phi\|_{L^2}=\|\hat{\phi}\|_{L^2}= 1/\sqrt{2\beta}.$ On the other hand, using  \eqref{uzt}, we obtain
\begin{eqnarray*}
\|\hat{\phi}\|_{L^2(\R)}^2= \|Z_{\beta}^{-1}\Psi\|_{L^2(\R)}^2 & = & \| \Psi \|_{L^2([0, 1)^2)}^2 \\
& = &  \frac{1}{\beta}\int_0^1 \int_0^1  \frac{|Z_{\beta}g(x,\xi)|^2}{ \sum_{r=0}^{\beta^{-1}-1}  \left| Z_{\beta}g \left(x,  \xi - \beta r \right)\right|^2 } dx d\xi \\
& = & \beta^{-1} \|h\|_{L^2(\mathbb T)}^2,
\end{eqnarray*}
where 
$h(x, \xi) :=  \frac{G(x, \beta \xi)}{ \left(\sum_{r=0}^{\beta^{-1}-1}  \left| G \left(x, \beta \xi - \beta r \right)\right|^2 \right)^{1/2}},  (x, \xi \in \mathbb T). $
  Consider translation operator in the second variable $T_{\ell}:L^2(\mathbb T^2) \to L^2(\mathbb T^2): h(x,y)\mapsto h(x, \xi-\beta \ell), $  and we have  $\|T_{\ell} h\|_{L^2}= \|h\|_{L^2}$ for $\ell \in \mathbb N.$
This together with \eqref{qp}, we obtain
  \begin{eqnarray*}
 \beta^{-1} \|h\|_{L^2(\mathbb T^2)}^2 & = &   \|h\|^2_{L^2(\mathbb T^2)} + \sum_{\ell=1}^{\beta^{-1}-1} \|T_{\ell}h\|^2_{L^2(\mathbb T)}\\
 & = &  \int_0^1 \int_0^1  \frac{  \sum_{\ell =0}^{\beta^{-1}-1} |Z_{\beta}g(x,\xi-\beta \ell )|^2}{ \sum_{r=0}^{\beta^{-1}-1}  \left| Z_{\beta}g \left(x,  \xi - \beta r \right)\right|^2 } dx d\xi=1.
  \end{eqnarray*}
  Thus, we have  $\|\hat{\phi}\|_{L^2(\R)}^2=1$ a contradiction to  the hypothesis $\beta^{-1}\neq 2.$
\end{proof}

\section*{Acknowledgments.}  D. G. B.\  is grateful to Professor  Kasso Okoudjou for hosting and  arranging research facilities at the University of Maryland.  D. G. B. is  thankful to  SERB Indo-US Postdoctoral Fellowship (2017/142-Divyang G Bhimani) for the financial support. D.G.B.   would like to  express  many thanks  to Professor Pascal Aucher for sending  his paper \cite{auscher1994remarks}.  D.G.B. is also thankful to DST-INSPIRE and TIFR CAM for the  academic leave.  K. A. O.\ was partially supported by a grant from the Simons Foundation $\# 319197$,  the U. S.\ Army Research Office  grant  W911NF1610008,  the National Science Foundation grant DMS 1814253, and an MLK  visiting professorship.

\bibliographystyle{amsplain}
\bibliography{dkbibfile}

\providecommand{\bysame}{\leavevmode\hbox to3em{\hrulefill}\thinspace}
\providecommand{\MR}{\relax\ifhmode\unskip\space\fi MR }
\providecommand{\MRhref}[2]{%
  \href{http://www.ams.org/mathscinet-getitem?mr=#1}{#2}
}
\providecommand{\href}[2]{#2}
\begin{thebibliography}{10}

\bibitem{auscher1994remarks}
Pascal Auscher, \emph{Remarks on the local {F}ourier bases}, Wavelets:
  mathematics and applications (J.~J. Benedetto and M.~W. Frazier, eds.),
  Studies in {A}dvanced {M}athematics, CRC Press, Boca Raton, FL, 1994,
  pp.~203--218.

\bibitem{blt}
R.~Balian, \emph{Un principe d'incertitude fort en th{\'e}orie du signal ou en
  m{\'e}canique quantique}, Compt. Rend. Acad. Sci. Ser. II \textbf{292}
  (1981), 1357--1362.

\bibitem{Bat88}
G.~Battle, \emph{Heisenberg proof of the {B}alian-{L}ow theorem}, Lett. Math.
  Phys. \textbf{15} (1988), 175--177.

\bibitem{BenHeiWal94}
J.~J. Benedetto, C.~Heil, and D.~F. Walnut, \emph{Differentiation and the
  {B}alian-{L}ow theorem}, J. Fourier Anal. Appl. \textbf{1} (1995), no.~4,
  355--402.

\bibitem{bjlo}
M.~Bownik, M.~S. Jakobsen, J.~Lemvig, and K.~A. Okoudjou, \emph{On wilson bases
  in $l^2(\mathbb r^d)$}, SIAM Journal on Mathematical Analysis \textbf{49}
  (2017), no.~5, 3999--4023.

\bibitem{CasChrJan01}
P.~G. Casazza, O.~Christensen, and A.~J. E.~M. Janssen, \emph{Weyl-{H}eisenberg
  frames, translation invariant systems and the {W}alnut representation}, J.
  Funct. Anal. \textbf{180} (2001), no.~1, 85--147.

\bibitem{ChMJM16}
E.~Chassande-Mottin, S.~Jaffard, and Y.~Meyer, \emph{Des ondelettes pour
  d\'etecter les ondes gravitationnelle}, Gaz. Math. (2016), no.~148, 61--64.

\bibitem{wc}
W.~Czaja, \emph{Characterizations of {G}abor systems via the {F}ourier
  transform}, Collectanea Mathematica \textbf{51} (2000), no.~2, 205--224.

\bibitem{djl}
I.~Daubechies, S.~Jaffard, and J.-J. Journ\'e, \emph{A simple {W}ilson
  orthonormal basis with exponential decay}, SIAM Journal on Mathematical
  Analysis \textbf{22} (1991), no.~2, 554--573.

\bibitem{floris2018electromagnetic}
S.~J. Floris and B.~P. de~Hon, \emph{Electromagnetic reflection--transmission
  problems in a wilson basis: fiber-optic mode-matching to homogeneous media},
  Optical and Quantum Electronics \textbf{50} (2018), no.~3, 124.

\bibitem{floris2018wilson}
\bysame, \emph{Wilson basis expansions of electromagnetic wavefields: a
  suitable framework for fiber optics}, Optical and Quantum Electronics
  \textbf{50} (2018), no.~3, 120.

\bibitem{dg}
D.~Gabor, \emph{Theory of communication. part 1: {T}he analysis of
  information}, Journal of the Institution of Electrical Engineers-Part III:
  Radio and Communication Engineering \textbf{93} (1946), no.~26, 429--441.

\bibitem{gro}
K.~Gr{\"o}chenig, \emph{Foundations of time-frequency analysis}, Applied and
  Numerical Harmonic Analysis, Springer-Birkh\"auser, New York, 2001.

\bibitem{Gro14}
K.~Gr\"ochenig, \emph{The mystery of {G}abor frames}, J. Fourier Anal. Appl.
  \textbf{20} (2014), no.~4, 865--895.

\bibitem{heil07}
C.~Heil, \emph{History and evolution of the density theorem for {G}abor
  frames}, J. Fourier Anal. Appl. \textbf{13} (2007), no.~2, 113--166.

\bibitem{hw}
E.~Hern{\'a}ndez and G.~Weiss, \emph{A first course on wavelets}, CRC press,
  1996.

\bibitem{NeKlMi12}
S.~Klimenko, G.~Mitselmakher, and V.~Necula, \emph{Method for detection and
  reconstruction of gravitational wave transients with networks of advanced
  detectors}, Journal of Physics: Conference Series \textbf{363} (2012),
  no.~012032.

\bibitem{Drag16}
S.~Klimenko, G.~Vedovato, M.~Drago, F.~Salemi, V.~Tiwari, G.~A. Prodi,
  C.~Lazzaro, K.~Ackley, S.~Tiwari, C.~F.~Da Silva, and G.~Mitselmakher,
  \emph{Method for detection and reconstruction of gravitational wave
  transients with networks of advanced detectors}, Phys. Rev. D \textbf{93}
  (2016), no.~042004.

\bibitem{gt}
G.~Kutyniok and T.~Strohmer, \emph{Wilson bases for general time-frequency
  lattices}, SIAM journal on mathematical analysis \textbf{37} (2005), no.~3,
  685--711.

\bibitem{Low85}
F.~Low, \emph{Complete sets of wave packets}, A passion for {P}hysics--{E}ssays
  in {H}onor of {G}eoffrey {C}hew (C.~DeTar et~al., ed.), World Scientific,
  Singapore, 1985, pp.~17--22.

\bibitem{kw2}
K.~C. Wang, \emph{Necessary and sufficient conditions for expansions of
  {W}ilson type}, Acta Mathematica Sinica, English Series \textbf{24} (2008),
  no.~7, 1107.

\bibitem{ws}
K.~G. Wilson, \emph{Generalized {W}annier functions},  (1987), unpublished
  manuscript.

\bibitem{pw}
P.~Wojdy{\l}{\l}o, \emph{Characterization of {W}ilson systems for general
  lattices}, Int. J. Wavelets Multiresolut. Inf. Process. \textbf{6} (2008),
  no.~2, 305--314.

\bibitem{pw1}
\bysame, \emph{Wilson system for triple redundancy}, Int. J. Wavelets,
  Multiresolut. Inf. Process. \textbf{9} (2011), no.~1, 151--167.

\end{thebibliography}

\end{document}